\newtheorem{lemma}{Lemma}
\newtheorem{theorem}{Theorem}
\newtheorem{prop}{Proposition}
\newtheorem{cor}{Corollary}
\theoremstyle{remark}
\newtheorem{remark}{Remark}
\newcommand{\R}{\ensuremath{\mathbb{R}}}
\newcommand{\Z}{\ensuremath{\mathbb{Z}}}
\newcommand{\N}{\ensuremath{\mathbb{N}}}
\title{A framework for discrete bilinear spherical averages and applications to $\ell^p$-improving estimates}
\author[T.C. Anderson]{Theresa C. Anderson}
\address{Department of Mathematics, Carnegie Mellon University, Wean Hall, Hammerschlarg Dr., Pittsburgh, PA 15213}
\email{tanders2@andrew.cmu.edu}
\author[A.V. Kumchev]{Angel V. Kumchev}
\address{Department of Mathematics, Towson University, 8000 York Road, Towson, MD 21252}
\email{akumchev@towson.edu}
\author[E.A. Palsson]{Eyvindur A. Palsson}
\address{Department of Mathematics, Virginia Tech, 225 Stanger Street, Blacksburg, VA 24061}
\email{palsson@vt.edu}
\date{\today}
\keywords{Discrete averages, bilinear operator, $\ell^p$-improving inequalities, interpolation of operators}
\numberwithin{equation}{section}
\newcommand{\eps}{\varepsilon}
\begin{document}

\begin{abstract}
We decompose the discrete bilinear spherical averaging operator into simpler operators in several ways.  This leads to a wide array of extensions, such as to the simplex averaging operator, and applications, such as to operator bounds.
\end{abstract}

\maketitle

\section{Introduction}
Pointwise upper bounds are constantly sought after in analysis, as they provide us with direct ways to estimate more complex objects.  It is this theme that we pursue in this paper; inspired by progress in the continuous setting, we seek to develop these tools in the discrete (number-theoretic) setting.  Our main object of study is the discrete bilinear spherical averaging operator, which is not only important in a vairety of applications, but is a natural first object of study in several ways.  Indeed, proving that the bilinear spherical operator is dominated above by a product of two operators (a linear spherical maximal operator and a Hardy-Littlewood maximal function) is only a very recent result \cite{JL}, \cite{AP2}.  The ways balls and spheres interact in number theory is already interesting, and is the first step to considering more complicated Diophantine equations and operators, such as the triangle operator \cite{AKP} which involves more complexity.  Recent breakthroughs in the continuous world have propelled advances in the discrete, where tools and underlying structure are quite different due to the way curvature interacts with lattice point counts.  This already poses significant difficulties in the linear setting in earlier pioneering works such as \cite{MSW}.  Our paper gives both an exact pointwise decomposition and a more refined pointwise upper bound for these bilinear averages, yielding more structural information about the ways linearity, curvature and number theory interact.  Moreover, we can extend this work to simplex operators and apply this decomposition to get a variety of $\ell^p$-improving bounds.

Motivated by the seminal work of Stein \cite{Stein76} and Bourgain \cite{Bourgain86} on the boundedness of the maximal linear spherical averaging operator, Geba {\em et al.} \cite{GebaEtAl13} introduced the maximal bilinear spherical averaging operator. Even earlier, particular variants of it appeared in the work of Oberlin \cite{DOberlin88}. A number of partial results were obtained \cite{BarrionuevoEtAl18,GrafakosEtAl21,HeoEtAl2020} until Jeong and Lee \cite{JL} fully resolved the boundedness of the maximal bilinear spherical averaging operator through a slicing method that allowed for a pointwise domination of the operator by a product of the maximal linear spherial averaging operator and the Hardy-Littlewood maximal operator. There are also recent further refinements obtaining sparse bounds in this setting \cite{PalssonSovine,BFOPZ}.

Just as Magyar, Stein and Wainger established sharp mapping properties for a discrete analogue of the maximal linear spherical averaging operator in their celebrated work \cite{MSW}, the first and third authors of this note, motivated by developments in the continuous setting, introduced a discrete analogue of the maximal bilinear spherical averaging operator \cite{AP,AP2}. In one of those papers, they obtained a sharp range of estimates in high enough dimensions, through a slicing argument analogous to that of Jeong and Lee, while in the other paper, they built on the work of Magyar, Stein and Wainger and many others and employed the circle method to obtain a broad range of estimates in dimensions lower than what the slicing method can handle. The first author has further elaborated on number theoretic variants of the slicing method and the many intricacies that combining number theory, curvature, and multilinearily entail in a recent paper \cite{Anderson}.  One of the contributions of this paper is to extend the reach of the slicing method to handle cases where the circle method was used previously.

Jeong and Lee \cite{JL} not only showed the full boundedness range for the maximal bilinear spherical averaging operator, but also established a wide range of bounds for a single scale variant. In order to establish those bounds a more refined decomposition was needed. Motivated by their work, we refine the pointwise decomposition initiated in the discrete setting in \cite{AP} significantly in two key ways. Both refinements represent a framework in which to view discrete bilinear averages, which we expect to have further applications. Already, several applications appear below. Our techniques also carry over to simplex operators considered in \cite{AKP} and \cite{CLM}.

To state our refinements, first define the discrete bilinear spherical averages by
\[
T_\lambda(f,g)(\bm{x}) = \frac{1}{N(\lambda)}\sum_{|\bm{u}|^2+|\bm{v}|^2=\lambda}f(\bm{x}-\bm{u})g(\bm{x}-\bm{v})
\]
where $\bm{u}$, $\bm{v} \in \Z^d$ and $N(\lambda) = \#\{ (\bm{u},\bm{v}) \in \Z^d\times\Z^d: |\bm{u}|^2+|\bm{v}|^2=\lambda\}$ is the number of lattice points on the sphere of radius $\lambda^{1/2}$ in $\R^{2d}$, asymptotic to $\lambda^{d-1}$ for all $d \geq 3$. Our first refinement decomposes $T_\lambda$ into a weighted average of spherical averages $S_\lambda$ (for a precise definition, see \eqref{spherical average}), which is an equality in higher dimensions.
\begin{prop}
    \label{first refinement}
    For all dimensions $d\geq 3$, we have
    \[
 T_\lambda(f,g) \lesssim \sum_{r=1}^{\lambda - 1} \frac{r^{d/2 - 1}(\lambda - r)^{d/2 - 1}}{\lambda^{d-1}} S_{r}(f)S_{\lambda-r}(g) + \frac{1}{\lambda^{d/2}}fS_{\lambda}(g) + \frac{1}{\lambda^{d/2}}gS_{\lambda}(f).
 \]
 Moreover, if $d \geq 5$, we have equality above.
\end{prop}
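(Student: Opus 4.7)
My proof plan is a direct combinatorial splitting of the defining sum of $T_\lambda$ according to the common value of $r := |\mathbf{u}|^2$. The constraint $|\mathbf{u}|^2 + |\mathbf{v}|^2 = \lambda$ forces $|\mathbf{v}|^2 = \lambda - r$ once $r$ is fixed, so the two inner sums decouple:
\[
T_\lambda(f,g)(\mathbf{x}) = \frac{1}{N(\lambda)} \sum_{r=0}^{\lambda} \Big(\sum_{|\mathbf{u}|^2 = r} f(\mathbf{x}-\mathbf{u})\Big) \Big(\sum_{|\mathbf{v}|^2 = \lambda - r} g(\mathbf{x}-\mathbf{v})\Big).
\]
This reorganization is an exact identity carrying no loss.

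Next I would recognize each inner sum as a multiple of a linear spherical average. Writing $r_d(r)$ for the number of lattice representations of $r$ as a sum of $d$ squares, we have $\sum_{|\mathbf{u}|^2 = r} f(\mathbf{x}-\mathbf{u}) = r_d(r)\, S_r(f)(\mathbf{x})$ (with the convention $S_0(f)(\mathbf{x}) = f(\mathbf{x})$, since $\mathbf{u}=\mathbf{0}$ is the only lattice point with $|\mathbf{u}|^2=0$). Substituting yields the exact decomposition
\[
T_\lambda(f,g)(\mathbf{x}) = \sum_{r=0}^{\lambda} \frac{r_d(r)\, r_d(\lambda - r)}{N(\lambda)}\, S_r(f)(\mathbf{x})\, S_{\lambda-r}(g)(\mathbf{x}).
\]
The endpoints $r = 0$ and $r = \lambda$ contribute $\frac{r_d(\lambda)}{N(\lambda)}\, f\, S_\lambda(g)$ and $\frac{r_d(\lambda)}{N(\lambda)}\, g\, S_\lambda(f)$ respectively, which I would pull out to become the boundary terms in the statement.

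To close the argument I would feed in the two standard lattice-point asymptotics. First, $N(\lambda) \asymp \lambda^{d-1}$ for all $d \geq 3$, since the ambient sphere has dimension $2d-1 \geq 5$ and the Hardy--Littlewood singular series is bounded below. Second, the one-sided bound $r_d(n) \lesssim n^{d/2-1}$ converts the interior coefficients into $\frac{r^{d/2-1}(\lambda-r)^{d/2-1}}{\lambda^{d-1}}$ and the boundary weights into $\lambda^{-d/2}$. For the equality clause in dimension $d \geq 5$, Kloosterman's theorem supplies the matching lower bound $r_d(n) \gtrsim n^{d/2-1}$ for every $n \geq 1$, turning every $\lesssim$ above into an $\asymp$.

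The principal (and really only) obstacle is the behavior of $r_d$ in low dimensions. In $d = 3$ and $d = 4$ the representation function lacks a clean two-sided pointwise asymptotic---for instance $r_3(n) = 0$ whenever $n = 4^a(8b+7)$---so one cannot hope for the identity to be an equivalence, and only the one-sided upper estimate survives, which is exactly why the equality is restricted to $d \geq 5$. Beyond this point the argument is pure bookkeeping; the conceptual content is that, once sliced by the radius of its first factor, $T_\lambda$ is precisely a weighted superposition of products of linear spherical averages, with weights reflecting the relative surface areas of the two factor spheres.
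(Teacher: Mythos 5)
Your proposal takes essentially the same route as the paper: slice the defining sum of $T_\lambda$ by $r = |\bm u|^2$, observe that the two inner sums decouple, recognize them as (count-normalized) spherical averages, and convert via lattice-point asymptotics. The only stylistic difference is in how the normalizations are handled. The paper, after noting $S(\lambda) \asymp \lambda^{d/2-1}$ and $N(\lambda) \asymp \lambda^{d-1}$, simply \emph{redefines} $S_\lambda$ and $T_\lambda$ for $d \geq 5$ with the clean power-of-$\lambda$ weights $\lambda^{1-d/2}$ and $\lambda^{1-d}$ in place of the true counting normalizations; with that convention the decomposition is a literal identity obtained by regrouping terms, with no approximation. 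You instead carry the exact representation counts $r_d(r)$, $r_d(\lambda-r)$, $N(\lambda)$ throughout and then invoke two-sided asymptotics to pass to the stated form, which yields a $\asymp$ rather than an exact identity for $d \geq 5$. Both readings of the ``equality'' clause are reasonable, and the mathematical content is the same.

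One shared soft spot worth flagging: the upper bound $r_d(n) \lesssim n^{d/2-1}$, which you (and the paper, implicitly, in the sentence claiming \eqref{spherical average} dominates \eqref{spherical average high dim}) rely on for the $d = 3,4$ case of the $\lesssim$ bound, is not literally true in those dimensions. For $d=3$, $r_3(n)$ is controlled by a Hurwitz class number and can be as large as $n^{1/2}\log n$; for $d=4$, Jacobi's formula gives $r_4(n) \asymp \sigma(n')$ which exceeds $n$ by a $\log\log n$ factor along a sparse sequence. So the pointwise upper bound in dimensions $3$ and $4$ holds only up to such sub-polynomial losses, a fact that neither the paper's one-line remark nor your invocation of ``$r_d(n) \lesssim n^{d/2-1}$'' makes precise. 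For $d \geq 5$ (which is all that is actually used downstream, in Proposition \ref{prop7} and Corollaries \ref{second refinement corollary} and \ref{first refinement corollary}) the two-sided bound $r_d(n) \asymp n^{d/2-1}$ is indeed uniform and your argument is clean. One small attribution quibble: for $d \geq 5$ the matching lower bound comes already from the Hardy--Littlewood circle method (the singular series is bounded below); Kloosterman's refinement is what one needs for $d=4$, not $d\geq 5$.
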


Our second refinement, and main result, can now be stated as a decomposition on these bilinear averages as a weighted sum of a ball average $A_\lambda$ and a dyadic maximal spherical average.
\begin{theorem}
\label{second refinement}
    For all dimensions $d\geq 3$, the discrete bilinear spherical averages have the pointwise bound of
        \[
    T_\lambda(f,g) \lesssim \lambda^{1-d/2}A_\lambda (f)g + \lambda^{1-d}\sum_{0 \le k \le \log_2{\lambda}}(2^k)^{d/2-1}(\lambda-2^k)^{d/2}A_{\lambda-2^k}(f){S}_{2^k}^*(g)
    \]
    where ${S}_{2^k}^*$ is the dyadic maximal spherical average defined in \eqref{dyadic spherical}.
\end{theorem}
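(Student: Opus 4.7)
The plan is to derive Theorem \ref{second refinement} from Proposition \ref{first refinement} by a dyadic decomposition of the summation variable, followed by a summation-by-parts that passes from weighted sums of spherical averages to ball averages.

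First, after the change of variables $s=\lambda-r$, the main sum in Proposition \ref{first refinement} reads
\[
\sum_{s=1}^{\lambda-1}\frac{s^{d/2-1}(\lambda-s)^{d/2-1}}{\lambda^{d-1}}\,S_{\lambda-s}(f)\,S_s(g),
\]
so that the summation variable is attached to the $g$-factor. I would split the range $1\le s\le\lambda-1$ into dyadic blocks $I_k=[2^k,2^{k+1})$ for $0\le k\le\log_2\lambda$. On $I_k$ two simplifications occur simultaneously: $s^{d/2-1}\asymp(2^k)^{d/2-1}$, and by the definition of the dyadic maximal spherical average in \eqref{dyadic spherical}, $S_s(g)\le S^{*}_{2^k}(g)$ pointwise. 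Pulling these factors out of the inner sum, the $k$-th block contributes at most
\[
\frac{(2^k)^{d/2-1}}{\lambda^{d-1}}\,S^{*}_{2^k}(g)\sum_{s\in I_k}(\lambda-s)^{d/2-1}\,S_{\lambda-s}(f).
\]

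The heart of the argument is to bound the inner sum by $(\lambda-2^k)^{d/2}A_{\lambda-2^k}(f)$. Reverting to $r=\lambda-s$, this inner sum is a partial sum of $r^{d/2-1}S_r(f)$ over $r$ in a subinterval of $[0,\lambda-2^k]$. Using the identity
\[
|B_R|\,A_R(f)=\sum_{r\le R}N_d(r)\,S_r(f),
\]
where $N_d(r)=\#\{\mathbf{u}\in\Z^d:|\mathbf{u}|^2=r\}$, together with the classical comparison $N_d(r)\asymp r^{d/2-1}$ (pointwise for $d\ge 5$, and in an averaged sense over any dyadic block for $d=3,4$), the partial sum is dominated by $\sum_{r\le\lambda-2^k}N_d(r)\,S_r(f)\sim(\lambda-2^k)^{d/2}A_{\lambda-2^k}(f)$, as required. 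I expect this pointwise-to-average replacement to be the main obstacle in the low dimensions $d=3,4$, where $N_d(r)$ can fluctuate significantly from sphere to sphere; the dyadic grouping of the blocks is essential to absorb these fluctuations.

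Finally, the two endpoint terms from Proposition \ref{first refinement} are handled directly. The term $\lambda^{-d/2}gS_\lambda(f)$ is dealt with via the trivial sphere-in-ball containment $N(\lambda)S_\lambda(f)\le|B_\lambda|A_\lambda(f)$ and $|B_\lambda|/N(\lambda)\sim\lambda$, which produces exactly the leading term $\lambda^{1-d/2}A_\lambda(f)g$ of Theorem \ref{second refinement}. The term $\lambda^{-d/2}fS_\lambda(g)$ is absorbed into the top dyadic scale $k\approx\log_2\lambda$ of the sum by applying the same containment to $g$ and using $f\lesssim\lambda^{d/2}A_\lambda(f)$.
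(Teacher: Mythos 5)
Your argument for $d \ge 5$ is essentially the paper's, only routed through Proposition~\ref{first refinement} rather than carried out directly on the double sum. The three pillars are the same: dyadic decomposition of $\lambda-|\bm u|^2$, domination of the inner spherical sum by the dyadic maximal operator $S^*_{2^k}$, and domination of the remaining sphere-count over the annulus $\lambda-|\bm u|^2\approx 2^k$ by the ball average $A_{\lambda-2^k}(f)$. The ``summation by parts'' you describe is really just the observation that the annulus $\{ \lambda - 2^{k+1} < |\bm u|^2 \le \lambda - 2^k\}$ sits inside the ball $\{|\bm u|^2 \le \lambda-2^k\}$, which is exactly the paper's step. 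One small slip in the endpoint handling: to absorb $\lambda^{-d/2} f S_\lambda(g)$ into the $k=K$ term of the target bound you need $f \lesssim (\lambda-2^K)^{d/2} A_{\lambda-2^K}(f)$, not $f\lesssim \lambda^{d/2}A_\lambda(f)$, since the $k=K$ term carries $A_{\lambda-2^K}(f)$; both inequalities are true, but only the former is the one you actually use.

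The genuine gap is in dimensions $d=3,4$, and it is caused precisely by the detour through Proposition~\ref{first refinement}. Two steps that you treat as benign both depend on the lattice-point count $N_d(r)$ being comparable to $r^{d/2-1}$ for each individual $r$, which fails dramatically in these dimensions (e.g.\ $d=4$, $r=4^m$ has $N_4(r)=24$). First, $S_s(g)\le S^*_{2^k}(g)$: with the normalization $\sigma_s = S(s)^{-1}\bm 1$ used in low dimensions, this inequality is equivalent to $S(s)\gtrsim s^{d/2-1}$, which is false for infinitely many $s$. Second, your bound $\sum_{r\in I'} r^{d/2-1} S_r(f) \lesssim \sum_{r\le R} N_d(r)S_r(f)$: an averaged comparison $N_d(r)\asymp r^{d/2-1}$ over a dyadic block does not justify this, because $S_r(f)(\bm x)$ depends on $r$, so you cannot decouple the weight from the summand. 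The paper avoids both obstacles by never reintroducing the normalized operators $S_r$ or $S_s$ with the erratic factor $N_d(r)^{-1}$: it bounds the raw sum $\sum_{|\bm v|^2=\mu} g \le \mu^{d/2-1}S^*_{2^k}(g)$ directly from the $\mu^{1-d/2}$-normalized definition~\eqref{dyadic spherical} of the dyadic maximal function, and bounds $\sum_{\lambda-|\bm u|^2\approx 2^k} f(\bm x-\bm u)$ by the ball average in one stroke without passing through sphere-by-sphere averages. If you rewrite your argument entirely in terms of unnormalized sums (i.e.\ bypass Proposition~\ref{first refinement} and work from the definition of $T_\lambda$ as the paper does), the low-dimensional case goes through without incident.
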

A more precise definition of the ball averages $A_\lambda(f)$ is given below.  This result is a variant of the decomposition used in the proof of Proposition 3.2 in \cite{JL}, but due to the discrete setting, we utilize new approaches and refinements.  In particular, we significantly refine the approach used in \cite{AP2} to be able to fully capture more subtle upper bounds.

One of the main applications of these decomposition results is to $\ell^p$-improving.  We recall briefly some history of $\ell^p$-improving for spherical averages, though there has been much activity in this area recently for a wide variety of operators. In the continuous setting, $L^p$-improving estimates for the spherical averaging operator are due to Strichartz \cite{Strichartz70} and Littman \cite{Littman71}. Improving estimates for the discrete (linear) spherical averages were first shown in Hughes \cite{Hughes} and Kesler and Lacey \cite{KL}.  Namely, they showed that for $d\geq 5$ and $\frac{d+1}{d-1} < p < 2$, one has
\begin{equation}
\label{linear improving}
   \|S_\lambda f\|_{p'} \lesssim\lambda^{\frac{d}{2} - \frac {d}{p}}\|f\|_{p}, 
\end{equation}
where $\| \cdot \|_p$ denotes the standard norm on $\ell^p(\mathbb Z^d)$, and $p' = p/(p-1)$ is the conjugate exponent to $p$.
(There are inconsistencies in various versions of these references, but we believe the result is correct as stated here.) For more literature on $\ell^p$-improving estimates in the discrete setting see \cite{Kesler21,HLY21,KLM20,Anderson20,MadridNote,HKLMY20}. Combined with the slicing techniques in \cite{AP2}, one can apply the result of Hughes and Kesler-Lacey in the multilinear setting as a black box, yielding multilinear $\ell^p$-improving.  Our results go much further to significantly improve on the black box application of \eqref{linear improving}. 

To state our results, we will frequently switch between language used in $\ell^p$-improving and that used in operator bounds. Recall that an operator $T$ is of type $(p,q;r)$ if $T$ is bounded from $\ell^p\times \ell^q\to \ell^r$, that is,
\[
\|T(f,g)\|_{r} \lesssim \|f\|_{p} \|g\|_{q}.
\]
In our work, the implied constants in such bounds for $T_\lambda$ will depend critically on $\lambda$, and therefore we will always indicate this dependence explicitly (all other implied constants will depend on other fixed parameters).  With this in mind, we note that our first corollary yields some $\ell^p$-improving in the range $p>2$.  This range is impossible in the linear setting.  We state the first corollary in what we think is its most natural form, but a symmetric statement that follows immediately by interpolation is given right after its proof.

\begin{cor}
\label{second refinement corollary}
The bilinear spherical averages satisfy the following $\ell^p$-improving bound
\[
\|T_\lambda(f,g)\|_{1} \lesssim \lambda^{-\frac{d}{2p}}\|f\|_{1}\|g\|_{p}
\]
whenever $d \geq 5$ and $p > \frac{d}{d-2}$.
\end{cor}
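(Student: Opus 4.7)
The plan is to take $\ell^1$ norms throughout the pointwise bound of Theorem~\ref{second refinement}, split each resulting product by H\"older's inequality with conjugate exponents $(p', p)$, and then control the two kinds of factors that appear by separate classical estimates: Young's inequality for the ball averages $A_r(f)$ and the Magyar--Stein--Wainger maximal theorem for the dyadic spherical maximal operators $S_{2^k}^*(g)$. The bottleneck is entirely this second ingredient; everything else is soft.

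For the ball averages, writing $A_r(f) = f \ast K_r$ with $K_r = |B(\sqrt r)|^{-1}\mathbf 1_{B(\sqrt r)}$ and $|B(\sqrt r)| \asymp r^{d/2}$, Young's inequality gives
\[
\|A_r(f)\|_{p'} \le \|f\|_1 \|K_r\|_{p'} \lesssim r^{-d/(2p)}\|f\|_1.
\]
For the maximal factor, the theorem of Magyar, Stein, and Wainger \cite{MSW} gives $\|S_{2^k}^*(g)\|_p \lesssim \|g\|_p$ uniformly in $k$, valid precisely in the regime $d \ge 5$ and $p > d/(d-2)$ --- exactly matching the hypotheses of the corollary.

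Plugging these two inputs into the decomposition from Theorem~\ref{second refinement} yields
\[
\|T_\lambda(f,g)\|_1 \lesssim \lambda^{1 - d/2}\lambda^{-d/(2p)}\|f\|_1\|g\|_p + \lambda^{1 - d}\|f\|_1\|g\|_p \sum_{0 \le k \le \log_2 \lambda}(2^k)^{d/2 - 1}(\lambda - 2^k)^{d/2 - d/(2p)}.
\]
Since $d \ge 5$, one has $\lambda^{1 - d/2} \le 1$, so the first term is already of the desired size. In the sum, the exponent $d/2 - d/(2p) = d/(2p')$ is positive, so $(\lambda - 2^k)^{d/2 - d/(2p)} \le \lambda^{d/2 - d/(2p)}$, and the weights $(2^k)^{d/2 - 1}$ form a geometric progression dominated by its largest term, which is $\asymp \lambda^{d/2 - 1}$. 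The total sum is therefore $\lesssim \lambda^{d - 1 - d/(2p)}$, and multiplying by the prefactor $\lambda^{1 - d}$ recovers the target rate $\lambda^{-d/(2p)}$.

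The decisive step --- and essentially the only place where the hypotheses are tight --- is the appeal to the Magyar--Stein--Wainger maximal bound: this is what pins down both the dimensional restriction $d \ge 5$ and the exponent threshold $p > d/(d-2)$, and it cannot be improved by purely arithmetic manipulation of the decomposition. Once that maximal estimate is accepted as a black box, the remainder of the argument is just H\"older, Young, and a geometric sum, which is why the final exponent in the corollary comes out so cleanly.
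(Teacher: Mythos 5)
Your proof is correct, and it is a modest but genuine simplification of the paper's argument. The paper splits into two cases ($p<2$ and $p\ge 2$) and applies H\"older with different exponent allocations in each: for $p<2$ it uses $\|A_{\lambda-2^k}(f)\|_p\,\|S_{2^k}^*(g)\|_{p'}$ together with Hughes' $\ell^p$-improving bound for the dyadic spherical maximal operator (its equation \eqref{eq5.2}, valid only for $\frac{d}{d-2}<p<2$), while for $p\ge 2$ it uses $\|A_{\lambda-2^k}(f)\|_{p'}\,\|S_{2^k}^*(g)\|_p$ and the mere $\ell^p$-boundedness of the maximal operator. You observed that the second allocation already closes the argument for \emph{all} $p>\frac{d}{d-2}$: the Magyar--Stein--Wainger bound $\|S_{2^k}^*(g)\|_p\lesssim\|g\|_p$ holds throughout that range, and then Young's inequality on the ball averages supplies the full decay factor $(\lambda-2^k)^{-d/(2p)}$, from which the geometric sum and the prefactor $\lambda^{1-d}$ recover $\lambda^{-d/(2p)}$ exactly as in the paper's $p\ge 2$ case. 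This eliminates both the case split and the appeal to Hughes' improving estimate for $S^*_\Lambda$, replacing it with the weaker (and more classical) boundedness input. The trade-off is philosophical rather than quantitative here --- the paper's $p<2$ branch showcases that the ball average and dyadic spherical maximal operator can \emph{share} the decay, which is a more flexible mechanism for other applications, but for the statement at hand your streamlined route reaches the same endpoint.
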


One can actually even change the $\ell^1$ norm to an $\ell^r$ norm with $r<1$ via a key lemma, Lemma \ref{lp improving lemma}.  But this lemma will do even more---we improve the range of $\ell^p$-improving that Proposition \ref{first refinement} yields, giving operator bounds for $T_\lambda$ that dip below $\ell^1$.  These are summed up in the next corollary.

\begin{cor}
\label{first refinement corollary}
The bilinear spherical averages satisfy the following $\ell^p$-improving bound
\[
\|T_\lambda(f,g)\|_{s} \lesssim \lambda^{\frac d{2s}-\frac dp}\|f\|_{p}\|g\|_{p}
\]
whenever $d \geq 5$, $\frac{d+1}{d-1}<p<2$, and $\frac p2 \le s \leq 1$.
\end{cor}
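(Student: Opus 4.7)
The strategy is to substitute the pointwise decomposition of Proposition \ref{first refinement} into $\|T_\lambda(f,g)\|_s$ and estimate the resulting sum using the linear $\ell^p$-improving estimate \eqref{linear improving}, appealing to Lemma \ref{lp improving lemma} to handle the subtleties of the quasi-norm regime $s < 1$.

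At the endpoint $s = 1$, I would apply Proposition \ref{first refinement} and take the $\ell^1$ norm. For each summand of the main sum, Hölder with dual exponents $p',p$ gives
\[
\|S_r(f)S_{\lambda-r}(g)\|_1 \le \|S_r(f)\|_{p'}\|S_{\lambda-r}(g)\|_p \lesssim r^{d/2-d/p}\|f\|_p\|g\|_p,
\]
using \eqref{linear improving} on the first factor (valid because $p > (d+1)/(d-1) > d/(d-1)$ keeps $d-1-d/p > 0$) and the trivial $\ell^p$-averaging bound on the second. The resulting weighted sum $\lambda^{1-d}\sum_{r=1}^{\lambda-1} r^{d-1-d/p}(\lambda-r)^{d/2-1}$ is a discretized Beta integral of order $\lambda^{d/2-d/p}$. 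The two boundary terms $\lambda^{-d/2}fS_\lambda(g)$ and $\lambda^{-d/2}gS_\lambda(f)$ are handled by the same Hölder-plus-\eqref{linear improving} recipe and contribute only $\lambda^{-d/p}\|f\|_p\|g\|_p$, which is absorbed into the main term.

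For $p/2 \le s < 1$, the naive quasi-norm subadditivity $\|\sum_r h_r\|_s^s \le \sum_r \|h_r\|_s^s$ combined with Hölder forces both factors into $\ell^p$ (making \eqref{linear improving} unusable), producing a spurious $\lambda^{2/p-1}$ loss. To recover the sharp exponent $d/(2s)-d/p$, I would invoke Lemma \ref{lp improving lemma}, which is designed precisely to transfer $\ell^1$-type bounds of the kind produced above to $\ell^s$ bounds with $s<1$ in the discrete bilinear setting. A clean packaging is to establish the endpoint $s = p/2$ separately via Lemma \ref{lp improving lemma} (for which the target factor collapses to $\lambda^0$) and then bilinearly interpolate with the $s = 1$ bound; a short check verifies that $\lambda^{d/(2s)-d/p}$ is the correct linear interpolant in $1/s$ between the endpoints $(s,\alpha)=(1, d/2-d/p)$ and $(p/2,0)$. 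The principal obstacle is exactly this $s<1$ regime, where $\ell^s$ is only a quasi-norm and Hölder-based reductions irretrievably lose a power of $\lambda$; once Lemma \ref{lp improving lemma} is available, the Beta-integral asymptotics in $r$ and the bilinear interpolation are routine.
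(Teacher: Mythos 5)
Your proposal is correct and follows essentially the same route as the paper: derive the $\ell^1$-improving bound $\|T_\lambda(f,g)\|_1\lesssim\lambda^{d/2-d/p}\|f\|_p\|g\|_p$ from Proposition~\ref{first refinement} via H\"older and \eqref{linear improving} (this is Proposition~\ref{prop7}), then apply Lemma~\ref{lp improving lemma} with $p=q$ and $\gamma=d/p-d/2$ to get all $s\in[p/2,1]$. The ``establish $s=p/2$ then interpolate'' packaging you suggest is a mild rewording of what Lemma~\ref{lp improving lemma} already gives in one shot, so it is unnecessary but harmless.
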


One may wonder if we can utilize similar decompositions to get $\ell^p$-improving for operators with more interactions, such as the simplex operators (a precise definition appears later).  We can do this by essentially ignoring the interactions between the variables in the definition in this operator.  Even though the interactions are ignored, the results are still satisfying and give a glimpse at what more refined technology may achieve.  In particular, see Proposition~\ref{prop5} below.  To go beyond this appears quite difficult and will likely involve using a significant amount of number theory to improve the $\ell^p\times \ell^q\to \ell^r$ bounds for the corresponding maximal functions (initiated in the case of triangle averages in \cite{AKP}, where one can already see the significant obstacles).  We are currently planning to pursue this problem in future work.

Multilinear and $k$-spherical analogues of these results are certainly possible. Indeed, we give a glimpse of such possibilities in our discussion of simplex averages in \S\ref{sec:3.4}. In general, such generalizations follow in the same vein as the techniques developed here, and we leave the details to the interested reader.

This paper is organized into five more sections.  Section 2 introduces background and simple, yet important, observations in the discrete setting.  Section 3 includes a basic decomposition.  Though the proof of this only takes one line, there are several applications and extensions that we can derive, including to maximal functions, basic $\ell^p$-improving, lacunary averages, and most notably to simplex operators.  Section 4 has our first refinement as a weighted sum of spherical averages, and discusses the further implications for $\ell^p$-improving.  Section 5 proves our second refinement, Theorem \ref{second refinement}, which is then used to derive Corollary~\ref{second refinement corollary}.  We more fully discuss the implications for operator bounds in Section 6, where the key Lemma \ref{lp improving lemma} is presented and then applied to prove Corollary \ref{first refinement corollary}.  Throughout we include discussion and mention several open directions for investigation.  We underscore that there are many other interesting papers elaborating on the topics presented here in different contexts.

\section{Background and frequently used observations}
Let $B = B(0,\lambda)$ denote the ball in $\Z^d$ with radius $\lambda^{1/2}$, and let $\mu_\lambda \approx \lambda^{-d/2}\bm 1_{B}$ denote the probability measure on this ball.  We will use frequently that 

\[
\|\mu_\lambda\|_q \lesssim \lambda^{-\frac d{2}+\frac{d}{2q}}
\]
with our normalization, via a simple calculation.  We will also frequently use the the nesting properties of the discrete $\ell^p$ spaces: 
\[
\|f\|_{q} \leq \|f\|_{p} \text{ for all } 1 \leq p \leq q \leq \infty.
\]

Recall that $S(\lambda) = \#\{ \bm{x} \in \Z^d: |\bm{x}|^2=\lambda\}$ is the number of lattice points on the sphere of radius $\lambda^{1/2}$ in $\R^{d}$, which is asymptotic to $\lambda^{d/2-1}$ for all $d \geq 5$ by the Hardy-Littlewood asymptotic.  The notation $| \cdot |$ will denote the Euclidean norm on $\mathbb R^d$.  For dimensions $3$ and $4$ we cannot use this asymptotic, but we will be able to get around this in certain cases.  We can then define a probability measure on the sphere of radius $\lambda$ in $\Z^d$ as $\sigma_\lambda = S(\lambda)^{-1}\bm 1_{\{|\bm x|^2 = \lambda\}}$.  To avoid having to write absolute values, we will assume that all functions are nonnegative, however, all our results carry over the general functions by tracking these absolute values.
We can then define the (discrete) ball averages as $A_\lambda(f) = f*\mu_\lambda$ and the discrete spherical averages as 
\begin{equation}
\label{spherical average}
    S_\lambda(f) = f*\sigma_\lambda = \frac{1}{S(\lambda)}\sum_{|\bm{u}|^2=\lambda}f(\bm{x}-\bm{u}).
\end{equation}

While this paper is about averaging operators, we will also use the spherical maximal function
\[
S^*(f) = \sup_{\lambda\in \N}S_\lambda(f).
\]
Finally, we use the convention to list $\ell^p$-improving estimates for the range in which there is decay in terms of $\lambda$ (i.e., the radius) in our statements, even if the estimate holds (without decay) outside of that range.

\section{First decomposition and implications}

As a first result, we can slightly refine the pointwise decomposition from \cite{AP2}, extending the range of bounds there.  We have 
\[
|T_\lambda (f,g)(\bm{x})| \lesssim \frac{1}{\lambda^{\frac{d}{2}}}\sum_{|\bm{u}|^2 \leq \lambda}|f(\bm{x}-\bm{u})|\cdot \frac{1}{\lambda^{\frac{d}{2}-1}} \bigg|\sum_{|\bm{v}|^2 = \lambda - |\bm{u}|^2} g(\bm{x}-\bm{v})\bigg| \leq A_\lambda(f)(\bm{x})\cdot S^* (g)(\bm{x})
\]
Already this simple bound has several applications and extensions, which we briefly consider before moving on to a more delicate decomposition.
\subsection{Bilinear maximal function in dimensions $3$ and $4$}
First, we note an extension of the results in \cite{AP2} for the discrete bilinear spherical maximal function $T^*(f,g) = \sup_{\lambda\in\N} T_\lambda(f,g)$ in dimensions 3 and 4. Using the pointwise domination we immediately get
\begin{equation}
 \|T^*(f,g)\|_{1+\varepsilon} \lesssim \|A^*(f)\|_{1+\varepsilon}\|S^* (g)\|_{\infty}   
\end{equation}
where $A^*$ is the Hardy-Littlewood maximal function.

Using the fact that the spherical maximal function is bounded on $\ell^\infty$, even in dimensions $3$ and $4$, and the Hardy-Littlewood maximal function on $\ell^{1+\varepsilon}$, we have
$T^*$ is bounded of type $(1+\varepsilon, \infty; 1+\varepsilon)$.  By switching the roles of $f$ and $g$, we have also that $T^*$ is bounded of type $(\infty, {1+\varepsilon}; {1+\varepsilon})$.  Interpolation then yields bounds in dimensions 3 and 4, not addressed in \cite{AP2}, and significantly improves the bounds in \cite{AP}.

\begin{prop}
The operator $T^*$ is bounded of type $(p,q;r)$ for all $p,q,r>1$ with $\frac{1}{p}+\frac{1}{q} \geq \frac{1}{r}$.
\end{prop}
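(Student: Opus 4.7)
The plan is to interpolate between the two bilinear endpoint bounds already established above together with the trivial one, and then fill in the full admissible region by nesting of the $\ell^p$-spaces on $\Z^d$.

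First I would collect three endpoints: (i) the bound $T^*:\ell^{1+\varepsilon}\times\ell^\infty\to\ell^{1+\varepsilon}$ recorded in the excerpt, valid for every $\varepsilon>0$; (ii) its mirror $T^*:\ell^\infty\times\ell^{1+\varepsilon}\to\ell^{1+\varepsilon}$, proved identically after summing over $\mathbf{v}$ first in the definition of $T_\lambda$ to obtain the symmetric pointwise estimate $T_\lambda(f,g)\lesssim A_\lambda(g)\cdot S^*(f)$; and (iii) the trivial bound $T^*:\ell^\infty\times\ell^\infty\to\ell^\infty$, which holds because every $T_\lambda$ is an average of products.

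Second, because $T^*$ is only sublinear, I would linearize it in the standard way by choosing an arbitrary measurable selector $\lambda:\Z^d\to\N$ and setting $T^\lambda(f,g)(\mathbf{x}):=T_{\lambda(\mathbf{x})}(f,g)(\mathbf{x})$. The three endpoint bounds transfer to the bilinear operator $T^\lambda$ with constants independent of the selector. Bilinear Riesz--Thorin interpolation then yields $T^\lambda:\ell^p\times\ell^q\to\ell^r$, uniformly in $\lambda$, for every $(1/p,1/q,1/r)$ in the convex hull of the three endpoint triples, which is the simplex $\{(a,b,a+b):a,b\geq 0,\ a+b\leq 1/(1+\varepsilon)\}$. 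Taking the supremum over $\lambda$ restores $T^*$, and sending $\varepsilon\downarrow 0$ produces $T^*:\ell^p\times\ell^q\to\ell^r$ for all $p,q,r>1$ lying on the diagonal $1/p+1/q=1/r$.

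Finally I would upgrade the diagonal case to the full half-space $1/p+1/q\geq 1/r$ via discrete $\ell^p$-nesting $\|f\|_s\leq\|f\|_t$, valid for $t\leq s$ on $\Z^d$. Given any admissible triple $(p,q,r)$, I would pick auxiliary exponents $p_0\geq p$ and $q_0\geq q$ with $p_0,q_0>1$ and $1/p_0+1/q_0=1/r$; for instance $1/p_0:=\min(1/p,1/r)$ and $1/q_0:=1/r-1/p_0$, whose membership in $[0,1/q]$ is precisely the hypothesis $1/p+1/q\geq 1/r$. The diagonal step already yields $\|T^*(f,g)\|_r\lesssim\|f\|_{p_0}\|g\|_{q_0}$, and the two nesting inequalities $\|f\|_{p_0}\leq\|f\|_p$ and $\|g\|_{q_0}\leq\|g\|_q$ collapse this to the desired estimate. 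The only subtlety in the whole argument is the sublinearity of $T^*$, handled routinely by the linearization above; what remains is bookkeeping with interpolation and nesting.
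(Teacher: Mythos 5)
Your proposal is correct and follows essentially the same route as the paper: obtain the two endpoint bounds $(1+\eps,\infty;1+\eps)$ and $(\infty,1+\eps;1+\eps)$ from the pointwise domination $T^*(f,g)\lesssim A^*(f)\,S^*(g)$ and its mirror, then interpolate, then send $\eps\to 0$. The paper compresses the final step into the single word ``interpolation''; you have filled in exactly the details that word elides, namely the linearization by a selector $\lambda(\cdot)$ needed to apply bilinear Riesz--Thorin to the merely sublinear $T^*$, the trivial third vertex $(\infty,\infty;\infty)$ which turns the interpolation region into the full simplex $\{1/p+1/q=1/r\le 1/(1+\eps)\}$ rather than a single line, and the nesting step that passes from the diagonal $1/p+1/q=1/r$ to the stated half-space $1/p+1/q\ge 1/r$. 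These are all standard and all needed, so the proposal is a sound, more explicit rendering of the paper's argument rather than a different one.
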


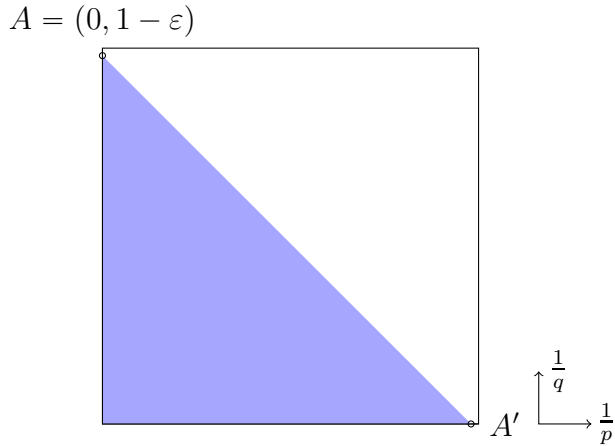
\begin{figure}[h!]\label{range}
\begin{tikzpicture}
\draw (0,0) rectangle (5,5);
\path[fill=blue!35] (0,0)--(0,4.9)--(4.9,0);
\draw (0,5)--(0,0)--(5,0);
\node [above] at (0,5) {$A=(0,1-\varepsilon)$};
\draw (0,4.9) circle [radius=0.04];
\node [right] at (5,0) {$A'$};
\draw (4.9,0) circle [radius=0.04];
\draw [<->] (5.8,0.7)--(5.8,0)--(6.5,0);
\node at (6.7,0) {$\frac{1}{p}$};
\node at (5.8,0.7) [right]{$\frac{1}{q}$};
\end{tikzpicture}
\caption{The range of $\ell^p\times \ell^q$ bounds (in terms of $\frac{1}{p}$ and $\frac{1}{q}$) for the discrete bilinear spherical maximal function for $d=3,4$.  The notation $X'$ represents the symmetric point to $X$ about the diagonal.}
\end{figure}
\subsection{Bilinear averages}
Secondly, by the pointwise product decomposition above and H\"older's inequality, we have
\[
\|T_\lambda(f,g)\|_{1} \lesssim \|A_\lambda (f)\|_{{p'}}\|S^* (g)\|_{p}
\]
$S^*$ is bounded on $\ell^p$ for all $p>\frac{d}{d-2}$ and $d \geq 5$, and the ball averages satisfy $\ell^p$-improving estimates via Young's inequality:
\begin{equation}\label{eq3.2}
 \|f*\mu_\lambda\|_{{p'}} \lesssim \|f\|_{p}\|\mu_\lambda\|_{{p'/2}} = \lambda^{\frac{d}{2}-\frac{d}{p}}\|f\|_{p}.   
\end{equation}
Hence, we get the following proposition.

\begin{prop}
When $d \ge 5$, the operator $T_\lambda$ is bounded of type $(p,p;1)$ for all $\frac{d}{d-2}<p< 2$, and it satisfies the following improving estimate:
\begin{equation}
\|T_\lambda(f,g)\|_{1} \lesssim \lambda^{\frac{d}{2}-\frac{d}{p}}\|f\|_{p}\|g\|_{p}. \label{second decomposition improving}
\end{equation}
\end{prop}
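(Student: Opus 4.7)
The proof essentially assembles three pieces that are already laid out in the preceding paragraph, so my plan is to make the bookkeeping explicit rather than introduce new ideas.

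First I would invoke the pointwise bound proved at the top of Section~3,
\[
T_\lambda(f,g)(\bm x) \lesssim A_\lambda(f)(\bm x)\cdot S^*(g)(\bm x),
\]
and apply H\"older's inequality with the conjugate pair $(p', p)$ to obtain
\[
\|T_\lambda(f,g)\|_{1} \lesssim \|A_\lambda(f)\|_{p'}\,\|S^*(g)\|_{p}.
\]
The two factors will now be controlled independently, and the constraints on $p$ come precisely from needing each factor to behave well.

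For the second factor, since $d\ge 5$ and $p>\frac{d}{d-2}$, the discrete spherical maximal theorem of Magyar--Stein--Wainger gives $\|S^*(g)\|_p \lesssim \|g\|_p$. For the first factor, I would apply Young's convolution inequality in the form $\|f*\mu_\lambda\|_{p'}\le \|f\|_p\|\mu_\lambda\|_{p'/2}$, which requires $p'/2\ge 1$, i.e. $p\le 2$; this is precisely where the upper restriction $p<2$ enters. The normalization bound $\|\mu_\lambda\|_{p'/2}\lesssim \lambda^{-d/2+d/p'}=\lambda^{d/2-d/p}$ noted in Section~2 then yields \eqref{eq3.2}, namely
\[
\|A_\lambda(f)\|_{p'}\lesssim \lambda^{\frac d2-\frac dp}\|f\|_p.
\]

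Multiplying the two estimates gives the improving bound \eqref{second decomposition improving}, and the $(p,p;1)$ type boundedness is just the qualitative statement of the same inequality. There is no real obstacle here beyond checking that the admissible range of $p$ for the two ingredients is compatible: the lower bound $p>\frac{d}{d-2}$ is dictated by the sharp $\ell^p$-boundedness of $S^*$ (hence the hypothesis $d\ge 5$), while the upper bound $p<2$ is dictated by Young's inequality applied to $\mu_\lambda$. The nonempty overlap requires $\frac{d}{d-2}<2$, i.e. $d>4$, consistent with the stated hypothesis $d\ge 5$.
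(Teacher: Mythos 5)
Your proof is correct and follows essentially the same argument as the paper: the pointwise domination $T_\lambda(f,g)\lesssim A_\lambda(f)\,S^*(g)$, H\"older's inequality, the Magyar--Stein--Wainger bound for $S^*$ on $\ell^p$ for $p>\frac{d}{d-2}$, and Young's inequality for $A_\lambda=\,\cdot*\mu_\lambda$. One small inaccuracy: Young's inequality only requires $p\le 2$ (not $p<2$); the strict inequality in the statement reflects the paper's convention of listing the range in which the exponent of $\lambda$ actually gives decay, which fails at $p=2$.
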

This bound has decay in $\lambda$ when $\frac{d}{d-2} < p < 2$.  A main point of our paper is that we will be able to improve upon this significantly later on.

\subsection{Lacunary bilinear maximal function}

There have been several recent results on lacunary maximal functions in the continuous and discrete setting.  Lacunary maximal functions are interesting to study in order to understand how far one can push a full maximal bound, and when the full maximal bounds are known, lacunary ones are often not.  In particular, a wider range of $\ell^p$ bounds can be achieved when restricting to lacunary sequences, and hence the best current results for the (linear) discrete lacunary spherical maximal function improve over the range for the full one.  It is an open question to find sharp bounds for the discrete lacunary spherical maximal function; the current best bounds are in \cite{KLM}, \cite{CH}.  Along these lines we list the simple bound that follows from combining the current best linear bounds with our first basic decomposition.  

Let $\{\lambda_k\}$ be a lacunary sequence.  Observe that 
\[
\|\sup_{\lambda_k}T(f,g)\|_{1} \leq \sum_{\lambda_k} \|T_{\lambda_k}(f,g)\|_{1} \leq \sum_{\lambda_k} \lambda_k^{-\frac{d}{2}+\frac{d}{p'}}\|f\|_{p}\|g\|_{p}.
\]
So, if $d\geq 6$ and $\frac{d-2}{d-3}<p<2$, the sum in $\lambda$ is finite and the bilinear discrete lacunary maximal function is of type $(p,p;1)$.  While the lower bound on $p$ is lower than those appearing elsewhere in this paper (for $d \geq 6$), this is only because of a better known linear range.  Lacunary bounds are not the main topic of this paper, but perhaps by using some of our ideas, one could obtain better bilinear discrete lacunary maximal bounds, improving significantly on the above.

\subsection{Simplex operators}
\label{sec:3.4}

We can also apply the same technique to the (discrete) simplex averaging operator, considered in \cite{AKP} and \cite{CLM} and recalled below in \eqref{simplex operator}.  As mentioned in the introduction, one of the main intricacies about this operator is that the variables $\bm{u}$ and $\bm{v}$ interact.  Due to the subtleties of this interaction, bounding this operator in the triangle case ($k=2$) and maintaining these interactions was a highly nontrivial task, which we undertook in \cite{AKP}.  Cook, Lyall, and Magyar approach the problem differently. They bypass the interactions, which allows them to treat all $k$ in both the continuous and discrete setting, though this results in a smaller range of bounds.  We can essentially use this approach here to yield $\ell^p$-improving bounds for these operators, which we state, prove, and discuss below. 

Let
\begin{equation}
T_{\rm tri}(f,g) = (\#\mathcal V_\lambda)^{-1} \sum_{(\bm{u,v}) \in \mathcal V_\lambda} f(\bm x-\bm u)g(\bm x-\bm v),
\end{equation}
where the summation is over the point set
\begin{align*}
\mathcal V_\lambda &= \big\{ (\bm{u, v}) \in \mathbb Z^d \times \mathbb Z^d : |\bm u|^2 = |\bm v|^2 = |\mathbf u - \mathbf v|^2 = \lambda \big\} \\
&= \big\{ (\bm{u, v}) \in \mathbb Z^d \times \mathbb Z^d : |\bm u|^2 = |\bm v|^2 = 2\bm u\cdot \bm v = \lambda \big\}.
\end{align*} 
By work of \cite{Raghavan}, \cite{EV}, \cite{AKP}, among many others, we have, for all $d \geq 7$, 
\begin{equation}\label{triangle opearator}
T_{\rm tri}(f,g)(\bm x) \lesssim \frac{1}{\lambda^{d-3}} \sum_{(\bm{u,v}) \in \mathcal V_\lambda} f(\bm x-\bm u)g(\bm x-\bm v).
\end{equation}
This operator is a discrete average over (equilateral) triangles in $\Z^d$, thus bounds for this operator and its maximal variant give significant quantitative information about the number of such triangles in Euclidean space.  

\begin{prop}\label{prop3}
Let $d \ge 7$. The triangle operator satisfies the following $\ell^p$-improving estimate
\[
\|T_{\rm tri}(f,g)\|_{1} \lesssim
\lambda^{1+\frac{d}{2}-\frac{d}{p}}\|f\|_{p}\|g\|_{p}
\]
for $\frac{d+1}{d-1}<p<\frac{2d}{d+2}$.
\end{prop}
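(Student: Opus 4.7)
The plan is to follow the template of the bilinear argument in the subsection \emph{Bilinear averages}, now starting from the pointwise bound \eqref{triangle opearator}. The key idea, in line with \cite{CLM}, is to bypass the interaction constraint $\bm u\cdot\bm v=\lambda/2$ in the definition of $\mathcal V_\lambda$; this decouples the sum into a product and reduces matters to a single application of the linear $\ell^p$-improving estimate \eqref{linear improving}.

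Concretely, enlarge the summation in \eqref{triangle opearator} from $\mathcal V_\lambda$ to all pairs $(\bm u,\bm v)$ with $|\bm u|^2=|\bm v|^2=\lambda$; this factors the double sum as a product of two spherical sums, giving
\[
T_{\rm tri}(f,g)(\bm x) \lesssim \lambda^{3-d}\,S(\lambda)^2\, S_\lambda(f)(\bm x)\, S_\lambda(g)(\bm x) \lesssim \lambda \cdot S_\lambda(f)(\bm x)\, S_\lambda(g)(\bm x),
\]
where the last step uses the asymptotic $S(\lambda)\asymp\lambda^{d/2-1}$ valid for $d\ge 5$. Next, take $\ell^1$-norms and apply H\"older's inequality with exponents $p'$ and $p$:
\[
\|T_{\rm tri}(f,g)\|_{1} \lesssim \lambda\, \|S_\lambda f\|_{p'} \|S_\lambda g\|_{p}.
\]
For the first factor, \eqref{linear improving} gives $\|S_\lambda f\|_{p'}\lesssim \lambda^{d/2-d/p}\|f\|_{p}$ in the range $\frac{d+1}{d-1}<p<2$ when $d\ge 5$. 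For the second factor, $S_\lambda$ is an averaging operator, so $\|S_\lambda g\|_{p}\le \|g\|_{p}$ trivially. Combining the three displays yields $\|T_{\rm tri}(f,g)\|_{1} \lesssim \lambda^{1+d/2-d/p}\|f\|_{p}\|g\|_{p}$, as claimed.

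The dimensional hypothesis $d\ge 7$ is inherited from \eqref{triangle opearator} (it is what guarantees $\#\mathcal V_\lambda\asymp\lambda^{d-3}$) and subsumes the $d\ge 5$ needed for both the asymptotic on $S(\lambda)$ and \eqref{linear improving}. The upper cutoff $p<\frac{2d}{d+2}$ is exactly the threshold at which the exponent $1+d/2-d/p$ becomes negative, so it is forced by the paper's convention of recording only estimates with genuine decay in $\lambda$. There is no substantial technical obstacle in the proof itself; the interesting feature is rather the cost we pay for its brevity---discarding the rigidity constraint $\bm u\cdot\bm v=\lambda/2$ means we gain nothing from the actual geometry of equilateral triangles, which accounts for the narrower $p$-range relative to the genuine bilinear operator $T_\lambda$. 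Extracting the full potential of the triangle structure, as pursued in \cite{AKP}, would require substantially deeper number-theoretic input and is beyond the scope of this proposition.
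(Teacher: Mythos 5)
Your proof is correct and follows essentially the same route as the paper: both drop the rigidity constraint $|\bm u - \bm v|^2 = \lambda$ to factor the double sum into $\lambda\, S_\lambda(f)S_\lambda(g)$, then apply H\"older's inequality, the linear $\ell^p$-improving bound \eqref{linear improving} to one factor, and the trivial $\ell^p$-boundedness of the averaging operator $S_\lambda$ to the other. Your supplementary remarks on the role of $d\ge 7$ and the origin of the cutoff $p < \frac{2d}{d+2}$ are accurate and match the paper's intent.
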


\begin{proof}
    Note that
\begin{align*}
T_{\rm tri}(f,g)(\bm{x}) &\lesssim {\lambda^{3-d}}\sum_{|\bm{u}|^2 = \lambda}\sum_{|\bm{v}|^2= |\bm{v-u}|^2 = \lambda}f(\bm{x}-\bm{u})g(\bm{x}-\bm{v})
\\
&\lesssim \lambda \bigg( \frac{1}{\lambda^{d/2-1}}\sum_{|\bm{u}|^2 = \lambda} f(\bm{x}-\bm{u}) \bigg) \bigg( \frac{1}{\lambda^{d/2-1}}\sum_{ |\bm{v}|^2 =\lambda} g(\bm{x}-\bm{v})  \bigg) \\
&\lesssim \lambda S_\lambda(f)(\bm x) S_\lambda (g)(\bm x).    
\end{align*}
Combining this inequality, H\"older's inequality, and \eqref{linear improving} gives
\[
\|T_{\rm tri}(f,g)\|_{1} \lesssim \lambda \|S_\lambda(f)\|_{p'}\|S_\lambda(g)\|_{p} \lesssim \lambda^{1+\frac{d}{2}-\frac{d}{p}}\|f\|_{p} \|S_\lambda(g)\|_{p}
\]
for $\frac{d+1}{d-1} < p < 2$ and $d \ge 5$. The result then follows, since the averaging operator $S_\lambda$ is bounded on $\ell^p$ for $p > 1$.
\end{proof}

For example, at the point $p = \frac{d}{d-2}$, $p' = \frac{d}{2}$, the exponent on $\lambda$ is negative when $d \geq 7$, yielding $l^p$-improving results in this range.  

We can also analyze simplices by defining an analogous operator: for a precise derivation of this representation see \cite{CLM}. Let 
\begin{equation}\label{simplex operator}
T_{\rm sim}(f_1, \dots , f_k)(\bm x) =
\frac{1}{\lambda^{\frac{dk-k(k+1)}{2}}}\sum_{(\bm{u}_1, \dots, \bm{u}_k)\in \mathcal W_\lambda}f_i(\bm x-\bm u_i),
\end{equation}
where 
\begin{equation*}
\mathcal W_\lambda = \big\{ (\bm u_1, \dots, \bm u_k) \in \mathbb Z^d \times \cdots \times \mathbb Z^d = \Z^{kd}: |\bm u_i|^2 = |\bm u_i - \bm u_j|^2 = \lambda \textrm{ for all } i \neq j\big\}.
\end{equation*} 
In this operator the variables interact in $\frac{k(k-1)}{2}$ ways; the fact that we ignore these interactions means that the $\ell^p$-improving estimates worsen as $k$ increases, resulting in a more restricted range on $d$. We summarize the results below.

\begin{prop}\label{prop4}
Let $k \ge 3$ and $d \ge 2k+3$. The simplex operator satisfies the following pointwise bound:
\begin{equation}\label{eq3.5} 
T_{\rm sim}(f_1, \dots , f_k)(\bm x) \lesssim \lambda^{\frac{k(k-1)}{2}}\prod_{i=1}^kS_\lambda(f_i)(\bm x).
\end{equation}

If $2 < p_1 \le p_2 \le \dots \le p_k$ are such that $\sum_i p_i^{-1} = 1$, one has the $\ell^p$-improving bound
\[
\| T_{\rm sim}(f_1, \dots , f_k) \|_1
\lesssim \lambda^{\frac{k(k-1) - dj}{2} + \sum_{i \le j} \frac d{p_i}} \bigg( \prod_{i=1}^j\|f_i\|_{p_i'} \bigg) \bigg( \prod_{i=j+1}^k \|f_i\|_{p_i} \bigg),
\]
where $j$ is the largest index $i$ for which $p_i < \frac 12(d+1)$.  
\end{prop}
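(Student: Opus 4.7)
The plan is to establish Proposition~\ref{prop4} in two separable steps: first the pointwise bound \eqref{eq3.5}, and then the $\ell^p$-improving estimate via H\"older's inequality and the linear improving bound \eqref{linear improving}, exactly in the spirit of the proof of Proposition~\ref{prop3}.

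For \eqref{eq3.5}, the key observation is that $\mathcal W_\lambda$ sits inside the Cartesian product of $k$ spheres of squared radius $\lambda$ in $\Z^d$. Since all $f_i\ge 0$, dropping the $\binom{k}{2}$ interaction constraints $|\bm u_i-\bm u_j|^2=\lambda$ gives
\[
T_{\rm sim}(f_1,\dots,f_k)(\bm x)\le \frac{1}{\lambda^{(dk-k(k+1))/2}}\prod_{i=1}^k\sum_{|\bm u_i|^2=\lambda}f_i(\bm x-\bm u_i)=\frac{S(\lambda)^k}{\lambda^{(dk-k(k+1))/2}}\prod_{i=1}^k S_\lambda(f_i)(\bm x).
\]
Since $d\ge 2k+3\ge 5$, the Hardy--Littlewood asymptotic $S(\lambda)\sim \lambda^{d/2-1}$ is available, and a short exponent computation collapses the prefactor to $\lambda^{k(k-1)/2}$, which is \eqref{eq3.5}.

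For the $\ell^p$-improving bound, I would take the $\ell^1$-norm of \eqref{eq3.5}, apply H\"older with the given exponents $p_1,\dots,p_k$ satisfying $\sum p_i^{-1}=1$, and split the resulting factors $\|S_\lambda(f_i)\|_{p_i}$ into two groups according to the cutoff $p_i<\tfrac{d+1}{2}$. For $i\le j$, where $2<p_i<\tfrac{d+1}{2}$, duality converts \eqref{linear improving} into
\[
\|S_\lambda(f_i)\|_{p_i}\lesssim \lambda^{d/p_i-d/2}\|f_i\|_{p_i'}.
\]
For $i>j$, the fact that $\sigma_\lambda$ is a probability measure gives $\|S_\lambda(f_i)\|_{p_i}\le \|f_i\|_{p_i}$ by Young's inequality. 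Multiplying through and collecting the $\lambda$-exponents yields
\[
\tfrac{k(k-1)}{2}+\sum_{i\le j}\left(\tfrac{d}{p_i}-\tfrac{d}{2}\right)=\tfrac{k(k-1)-dj}{2}+\sum_{i\le j}\tfrac{d}{p_i},
\]
which is exactly the exponent stated.

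The only delicate step is the bookkeeping needed to confirm the hypotheses are compatible. The conditions $2<p_1\le \dots\le p_k$ and $\sum p_i^{-1}=1$ force $k\ge 3$ and $p_1\le k$; the hypothesis $d\ge 2k+3$ then gives $p_1\le k\le \tfrac{d-3}{2}<\tfrac{d+1}{2}$, so $j\ge 1$ and each improving index $p_i'$ (for $i\le j$) falls safely inside the admissible range $\bigl(\tfrac{d+1}{d-1},2\bigr)$ of \eqref{linear improving}. The true obstacle is the conceptual one flagged in the introduction: since the argument discards all interactions between the $\bm u_i$, the resulting $\lambda^{k(k-1)/2}$ loss grows quadratically in $k$, which is why decay in $\lambda$ is only attainable in sufficiently high dimensions and over a restricted range of exponents. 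Recovering some of this loss would require genuinely multilinear number-theoretic input, as in \cite{AKP}.
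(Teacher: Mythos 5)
Your proposal is correct and follows essentially the same route as the paper: drop the interaction constraints to reduce to a product of $k$ spherical averages (producing the $\lambda^{k(k-1)/2}$ loss), then take the $\ell^1$-norm, apply H\"older with the $p_i$, and use \eqref{linear improving} at $p = p_i'$ for $i \le j$ together with $\ell^{p_i}$-boundedness of $S_\lambda$ for $i > j$. Your bookkeeping check that $j \ge 1$ and that each $p_i'$ lands in $\bigl(\frac{d+1}{d-1}, 2\bigr)$ is also what the paper records in the remark immediately following the proposition.
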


\begin{proof}
The first claim is immediate:
\[
  T_{\rm sim}(f_1, \dots , f_k)(\bm x) \le \lambda^{\frac{k(k+1)}{2}-\frac{dk}{2}} \prod_{i=1}^k\sum_{|\bm u_i|^2 = \lambda}f_i(\bm x-\bm u_i)
\lesssim \lambda^{\frac{k(k-1)}{2}}\prod_{i=1}^kS_\lambda(f_i)(\bm x).
\]
By taking the $\ell^1$-norm, followed by H\"older's inequality, we get 
\begin{align*}
\|T_{\rm sim}(f_1, \dots, f_k)\|_1 &\lesssim \lambda^{\frac{k(k-1)}{2}}\|S_\lambda(f_1) \cdots S_\lambda(f_k) \|_1 \lesssim \lambda^{\frac{k(k-1)}{2}}\prod_{i=1}^k\|S_\lambda(f_i)\|_{p_i},
\end{align*}
and the second claim follows by applying \eqref{linear improving} with $p = p_i'$ to the indices $i \le j$. We note that these are exactly the indices with $\frac {d+1}{d-1} < p_i' < 2$.
\end{proof}

\begin{remark}
Note that in the second part of the above proposition, such an index $j \ge 1$ must exist, since the inequality $p_1 \ge \frac 12(d+1)$ is inconsistent with the other hypotheses. Indeed, when $d \ge 4k-4$, one must have $j \ge 2$, since otherwise
\[ 1 = \sum_{i=1}^k \frac 1{p_i} < \frac 12 + \frac {2(k-1)}{d+1} < \frac 12 + \frac 12 = 1. \]
Thus, the exponent of $\lambda$ is no larger than
\[ \frac {k(k-1)}2 - d + \frac d{p_1} + \frac d{p_2} = \frac {k(k-1)}2 - d \sum_{i=3}^k \frac 1{p_i}, \]
which is negative for $d \gtrsim_{p_1,p_2} k^2$. We should point out that if $2 < p_1 < p_2 < 2 + \delta$, the implied constant grows as $\delta^{-1}$, so it may in some cases dominate the restriction on the dimension. 

On the other hand, if $p_1^{-1} + p_2^{-1} < 1-c$ and $d \ge 2c^{-1}(k-2)$, for some absolute constant $c > 0$, a variant of the above comments yields that $j \ge 3$. Hence, the exponent of $\lambda$ is then no larger than
\[ \frac {k(k-1)}2 - \frac d2 - d \sum_{i=4}^k \frac 1{p_i}, \]
and this expression is negative for $d > k(k-1)$. In the same vein, it is instructive to consider what happens in the ``balanced regime'' when $p_k < \frac 12(d+1)$ (so, $j=k$). This includes, for example, the case $p_1 = \dots = p_k = k$. Then the exponent of $\lambda$ is 
\[ \frac {k(k-1)}2 - \frac {dk}2 +  \sum_{i=1}^k \frac d{p_i} = \frac {k(k-1) - d(k-2)}2, \]
which is negative for all $d \ge 2k+3$. Thus, in this case, our result is improving in all allowed dimensions.
\end{remark}


Upon a brief reflection, one sees that Proposition \ref{prop4} is not really the case $k \ge 3$ of Proposition \ref{prop3}. Indeed, a result like Proposition \ref{prop4} is not possible for $k=2$. Our next proposition is the rightful extension of Proposition \ref{prop3} to larger values of $k$.  Note that one, and only one, of the indices is less than $2$ in what follows.

\begin{prop}\label{prop5}
Let $k \ge 3$ and $d \ge 2k+3$. If $\frac {d+1}{d-1} < p_1 < 2 < p_2 \le \dots \le p_k$ are such that $\sum_i p_i^{-1} = 1$, one has the $\ell^p$-improving bound
\[
\| T_{\rm sim}(f_1, \dots , f_k) \|_1
\lesssim \lambda^{\frac{k(k-1) - dj}{2} + \frac d{p_1'} + \sum_{1 < i \le j} \frac d{p_i}}  \|f_1\|_{p_1} \prod_{i=2}^k\|f_i\|_{p_i'},
\]
where $j$ is the largest index $i \le k-1$ for which $p_i < \frac 12(d+1)$. Moreover, if $j = k-1$ and $\frac {d+3}{d+1} < 2p_1^{-1} + p_k^{-1} < \frac 32$, one has 
\[
\| T_{\rm sim}(f_1, \dots , f_k) \|_1 \lesssim \lambda^{\frac{k(k-1) - d(k-2)}{2}}  \|f_1\|_{p_1} \prod_{i=2}^k\|f_i\|_{p_i'}.
\]
\end{prop}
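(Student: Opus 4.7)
The plan is to mirror the proof of Proposition~\ref{prop4}: beginning from the pointwise bound \eqref{eq3.5}, take $\ell^1$-norms to obtain
\[ \|T_{\rm sim}(f_1,\dots,f_k)\|_1 \lesssim \lambda^{k(k-1)/2}\,\Big\|\prod_{i=1}^k S_\lambda(f_i)\Big\|_1, \]
then apply Hölder's inequality and dispatch each factor $\|S_\lambda(f_i)\|_{q_i}$ via either the linear improving bound \eqref{linear improving} or the trivial $\ell^{q_i}$-boundedness of $S_\lambda$. The only substantive change from Proposition~\ref{prop4} is that now $f_1 \in \ell^{p_1}$ with $p_1 < 2$, so for this index I would apply \eqref{linear improving} in its natural direction, giving $\|S_\lambda f_1\|_{p_1'} \lesssim \lambda^{d/2 - d/p_1}\|f_1\|_{p_1}$, while for $2 \le i \le j$ the usual application at $p=p_i'$ yields $\|S_\lambda f_i\|_{p_i} \lesssim \lambda^{d/2 - d/p_i'}\|f_i\|_{p_i'}$.

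For the first estimate, I would take $q_1 = p_1'$, $q_i = p_i$ for $2 \le i \le j$, and $q_i = p_i'$ for $j < i \le k$. A short computation gives $\sum_i q_i^{-1} = 2\sum_{2\le i\le j}p_i^{-1} + (k-j) \ge 1$, so these exponents do not quite form a Hölder tuple; but with $q_0 = (\sum_i q_i^{-1})^{-1} \le 1$, the nesting property $\|\cdot\|_1 \le \|\cdot\|_{q_0}$ absorbs the slack, and combining it with Hölder gives $\|\prod_i S_\lambda(f_i)\|_1 \le \prod_i\|S_\lambda(f_i)\|_{q_i}$. Assembling the $\lambda$-factors (from \eqref{linear improving} for $1 \le i \le j$ and from the trivial bound for $i > j$) should recover the stated exponent $\frac{k(k-1)-dj}{2} + \frac{d}{p_1'} + \sum_{1 < i \le j}\frac{d}{p_i}$.

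For the moreover claim, the nesting step above is too lossy, so my plan is to pick the Hölder exponents so that $\sum_i q_i^{-1} = 1$ exactly. I would take $q_1 = p_1'$, $q_i = p_i$ for $2 \le i \le k-1$, and then Hölder forces $q_k = (2p_1^{-1} + p_k^{-1} - 1)^{-1}$. The hypothesis $\frac{d+3}{d+1} < 2p_1^{-1} + p_k^{-1} < \frac 32$ is precisely the assertion that $q_k \in (2,(d+1)/2)$, equivalently that $q_k' \in (\frac{d+1}{d-1},2)$, which is exactly the range where \eqref{linear improving} applies at $p=q_k'$; this gives $\|S_\lambda f_k\|_{q_k} \lesssim \lambda^{d/2 - d/q_k'}\|f_k\|_{q_k'}$. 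The automatic inequality $q_k < p_k$ (coming from $p_1 < 2$) then upgrades, via nesting, $\|f_k\|_{q_k'}$ to $\|f_k\|_{p_k'}$, delivering $f_k$ in the desired Lebesgue space.

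The main obstacle, and the reason the moreover bound is sharper, is verifying that the various $d/p_i$ contributions cancel telescopically once all improving bounds are combined: the $-d/p_1$ term from $i=1$, the $-d/p_i'$ terms for $2 \le i \le k-1$, and the $-d/q_k'$ term from $i=k$ should conspire to produce exactly $-d(k-1)$, leaving the clean exponent $\frac{k(k-1)-d(k-2)}{2}$. This cancellation is the delicate part; the rest is routine bookkeeping once the correct Hölder split has been identified.
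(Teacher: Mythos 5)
Your proposal is correct and takes essentially the same route as the paper: your single-step H\"older split with $q_k^{-1} = 2p_1^{-1}+p_k^{-1}-1$ recovers exactly the paper's two-stage split (where the paper's $p_1q_k$ equals your $q_k$), and the $\lambda$-exponent bookkeeping checks out to give $\frac{k(k-1)-d(k-2)}{2}$. The only cosmetic difference is in the first estimate, where you use a non-tight H\"older tuple (reciprocals summing to $\ge 1$, absorbed by $\ell^p$-nesting) while the paper uses a tight tuple and nests the $f$-norms afterward; both yield the same exponent.
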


\begin{proof}
Similarly to the proof of Proposition \ref{prop4}, we can combine \eqref{eq3.5}, H\"older's inequality, and \eqref{linear improving} with $p = p_1$ to obtain
\begin{align*}
\|T_{\rm sim}(f_1, \dots, f_k)\|_1 &\lesssim \lambda^{\frac {k(k-1)}2}\|S_\lambda(f_1) \cdots S_\lambda(f_k) \|_1 \\
&\lesssim \lambda^{\frac {k(k-1)}2}\|S_\lambda(f_1)\|_{p_1'} \|S_\lambda(f_2) \cdots S_\lambda(f_{k}) \|_{p_1} \\
&\lesssim \lambda^{\frac {k(k-1)}2 - \gamma_1} \|f_1\|_{p_1} \big\| S_\lambda(f_2)^{p_1} \cdots S_\lambda(f_{k})^{p_1} \big\|_1^{1/p_1},
\end{align*}
where $\gamma_1 = \frac d2 - \frac d{p_1'}$. 

Next, we set $q_i = p_i/p_1$, $i = 2, \dots, k-1$, and we note that
\[ 0 < \sum_{i=2}^{k-1} \frac 1{q_i} = p_1 \sum_{i=2}^{k-1}\frac 1{p_i} = p_1 \left( 1 - \frac 1{p_1} - \frac 1{p_k} \right) = p_1 - 1 - \frac {p_1}{p_k} < 1. \]
Thus, it is possible to choose a complementary $q_k \in (1, \infty)$ so that $\sum_i q_i^{-1} = 1$. In particular, $q_k^{-1} = 2 - p_1 + p_1/p_k$, and so we have
\[ \frac 1{p_1q_k} = \frac 2{p_1} - 1 + \frac 1{p_k} < 1 - \frac 1{p_k} = \frac 1{p_k'}. \]
H\"older's inequality then yields
\[ \big\| S_\lambda(f_2)^{p_1} \cdots S_\lambda(f_{k})^{p_1} \big\|_1 \le \prod_{i=2}^k \big\| S_\lambda(f_i)^{p_1}  \big\|_{q_i} = \prod_{i=2}^k \| S_\lambda(f_i)  \|_{p_1q_i}^{p_1} . \]
Thus, when $j \le k-1$, we can use \eqref{linear improving} with $p=p_i'$, $1 < i \le j$, and the boundedness of $S_\lambda$ to deduce that 
\begin{align}
\|T_{\rm sim}(f_1, \dots, f_k)\|_1 &\lesssim \lambda^{\frac {k(k-1)}2 - \gamma_1} \|f_1\|_{p_1} \| S_\lambda(f_k)\|_{p_1q_k} \prod_{i=2}^{k-1} \| S_\lambda(f_i)  \|_{p_i} \notag \\
&\lesssim \lambda^{\frac {k(k-1)}2 - \gamma_2}  \|f_1\|_{p_1} \| f_k \|_{p_1q_k} \bigg( \prod_{i=2}^j \|f_i\|_{p_i'} \bigg) \bigg( \prod_{i=j+1}^{k-1} \| f_i \|_{p_i} \bigg) \notag \\
&\lesssim \lambda^{\frac {k(k-1)}2 - \gamma_2}  \|f_1\|_{p_1} \prod_{i=2}^k \|f_i\|_{p_i'}, \label{eq3.6}
\end{align}
where $\gamma_2 = \gamma_1 + \sum_{1 < i \le j} \big( \frac d2 - \frac d{p_i} \big)$ (and the last step relies on the observation that $q_kp_1 > p_k'$). 
This completes the proof of the first claim of the proposition. 

Finally, when $j = k-1$, the condition $\frac {d+3}{d+1} < 2p_1^{-1} + p_k^{-1} < \frac 32$, means that $2 < p_1q_k < \frac 12(d+1)$, so we may also apply \eqref{linear improving} with $p = (p_1q_k)' > p_k'$ to show that
\[ \| S_\lambda(f_k)\|_{p_1q_k} \lesssim \lambda^{-\frac d2 + \frac d{p_1q_k}} \| f_k \|_{(p_1q_k)'} \le \lambda^{-\frac d2 + \frac d{p_1q_k}} \| f_k \|_{p_k'}. \]
This yields a variant of \eqref{eq3.6} with $\gamma_2$ replaced by
\[ \gamma_3 = \frac {dk}2 - \frac d{p_1'} - \sum_{i=2}^{k-1} \frac d{p_i} - \frac d{p_1q_k} = \frac {dk}2 - \frac d{p_1'} - \sum_{i=2}^{k} \frac d{p_1q_i} = \frac {dk}2 - d. \]
This establishes the second claim of the proposition.  
\end{proof}

\section{First refinement}
\label{sec:4}

Recall that $S(\lambda) \approx \lambda^{d/2 - 1}$ for $d\geq 5$ and $N(\lambda) \approx \lambda^{d-1}$ for $d \ge 3$. Without loss of generality for $d \geq 5$ we can thus consider the discrete spherical averaging operator to be
\begin{equation}
\label{spherical average high dim}
    S_\lambda(f)(\bm{x}) = \frac{1}{\lambda^{d/2 -1}}\sum_{|\bm{u}|^2=\lambda}f(\bm{x}-\bm{u}).
\end{equation}
and for $d\geq 3$ the discrete bilinear spherical averaging operator is
\[
T_\lambda(f,g)(\bm{x}) = \frac{1}{\lambda^{d-1}}\sum_{|\bm{u}|^2+|\bm{v}|^2=\lambda}f(\bm{x}-\bm{u})g(\bm{x}-\bm{v}).
\]

\begin{proof}[Proof of Proposition \ref{first refinement}]
    Our first refinement breaks the discrete bilinear spherical averaging operator up into weighted sums of products of discrete spherical averages.  We begin by assuming that $d \geq 5$ and thus we can use \eqref{spherical average high dim}.  We comment on the changes for $d \geq 3$ at the end.  We have
\begin{align*}
T_\lambda(f,g)(\bm{x})&= \frac{1}{\lambda^{d-1}} \sum_{r=1}^{\lambda - 1} \bigg( \sum_{|\bm{u}|^2 = r}f(\bm{x}-\bm{u}) \bigg) \bigg( \sum_{|\bm{v}|^2 = \lambda - r}g(\bm{x}-\bm{v}) \bigg) \\
& \quad + \frac{1}{\lambda^{d-1}}f(\bm{x})\sum_{|\bm{v}|^2 = \lambda}g(\bm{x}-\bm{v}) + \frac{1}{\lambda^{d-1}}g(\bm{x})\sum_{|\bm{u}|^2 = \lambda}f(\bm{x}-\bm{u})\\
&= \sum_{r=1}^{\lambda - 1} \frac{r^{d/2 - 1}(\lambda - r)^{d/2 - 1}}{\lambda^{d-1}} S_{r}(f)(\bm{x})S_{\lambda-r}(g)(\bm{x}) \\
& \quad + \frac{1}{\lambda^{d/2}}f(\bm x)S_{\lambda}(g)(\bm{x}) + \frac{1}{\lambda^{d/2}}g(\bm x)S_{\lambda}(f)(\bm{x}).
\end{align*}

Note that in dimensions $3$ and $4$ we must use the definition of $S_\lambda$ given in \eqref{spherical average}, since the distribution of points on these spheres is irregular.  However, we can follow the proof above, replacing \eqref{spherical average high dim} with \eqref{spherical average} instead, since \eqref{spherical average} is an upper bound for \eqref{spherical average high dim}.  Thus the decomposition carries through in these dimensions with $=$ replaced by $\lesssim$.  
\end{proof}

This decomposition now allows us to obtain our second set of $\ell^p$ improving bounds that are even better than in the previous section. 

Combining Proposition \ref{first refinement}, H\"{o}lder's inequality, and \eqref{linear improving}, we now obtain that
\begin{align*}
\| T_\lambda(f,g) \|_{1} &\leq \sum_{r=1}^{\lambda - 1} \frac{r^{d/2 - 1}(\lambda - r)^{d/2 - 1}}{\lambda^{d-1}} \| S_r(f) \|_{{p'}} \| S_{\lambda - r}(g) \|_{p} \notag\\
&\qquad + \frac{1}{\lambda^{d/2}}\| f \|_{p} \| S_{\lambda}(g) \|_{{p'}} + \frac{1}{\lambda^{d/2}}\| g \|_{p} \| S_{\lambda}(f) \|_{{p'}} \notag\\
&\lesssim \left( \sum_{r=1}^{\lambda - 1} \frac{r^{d/{2}-1}}{\lambda^{d/2}}r^{ - \frac{d}{p} + \frac d{2}} + \frac{1}{\lambda^{d/2}}\cdot\lambda^{ - \frac{d}{p} + \frac d{2}} \right) \| f \|_{p} \| g \|_{p} \notag\\
&\lesssim \left( \frac{\lambda^{d/{p'}}}{\lambda^{d/2}} + \lambda^{ - \frac{d}{p}} \right) \| f \|_{p} \| g \|_{p} \lesssim \lambda^{\frac d2 - \frac d{p}} \| f \|_{p} \| g \|_{p} 
\end{align*}
for $\frac{d+1}{d-1} < p < 2$. This is a slight improvement over our earlier result, as now the range of $\frac{d+1}{d-1} < p < 2$ is slightly larger than before when $\frac{d}{d-2} < p < 2$. \\

We can also use Proposition \ref{first refinement} to obtain a (slightly restricted) result in dimension $d = 4$. Recall \eqref{linear improving}. Hughes \cite{Hughes} proved also that when $\lambda$ is odd, $d = 4$, and $\frac 53 < p < 2$, one has
\begin{equation}
\label{linear improving+}
   \|S_\lambda f\|_{p'} \lesssim_\eps \lambda^{2 - \frac 4{p} + \eps}\|f\|_{p}. 
\end{equation}
Thus, for $d = 4$ and an odd $\lambda$, we can modify the above argument to get
\begin{align*}
\| T_\lambda(f,g) \|_{1} &\lesssim \sum_{\substack{r=1\\r \text{ odd}}}^{\lambda - 1} \frac{r(\lambda - r)}{\lambda^3} \| S_r(f) \|_{{p'}} \| S_{\lambda - r}(g) \|_{p}  + \sum_{\substack{r=1\\r \text{ even}}}^{\lambda - 1} \frac{r(\lambda - r)}{\lambda^3} \| S_r(f) \|_{{p}} \| S_{\lambda - r}(g) \|_{p'} \\
&\qquad + \frac{1}{\lambda^2}\| f \|_{p} \| S_{\lambda}(g) \|_{{p'}} + \frac{1}{\lambda^2}\| g \|_{p} \| S_{\lambda}(f) \|_{{p'}} \\
&\lesssim_\eps \frac 1{\lambda^3} \left( \sum_{r=1}^{\lambda - 1} r^{3 - \frac 4{p} + \eps}(\lambda-r) + \sum_{r=1}^{\lambda - 1} r(\lambda-r)^{3 - \frac 4{p} + \eps} + \lambda \cdot \lambda^{2 - \frac{4}{p} + \eps} \right) \| f \|_{p} \| g \|_{p} \\
&\lesssim_\eps \frac 1{\lambda^2} \left( \sum_{r=1}^{\lambda - 1} r^{3 - \frac 4{p} + \eps} + \lambda^{2 - \frac{4}{p} + \eps} \right) \| f \|_{p} \| g \|_{p} \lesssim_\eps \lambda^{2 - \frac 4{p} + \eps} \| f \|_{p} \| g \|_{p}.
\end{align*}
Note that in this version, we apply H\"older's inequality differently to the even and odd values of $r$. In each case, this leaves us to apply Hughes' bound \eqref{linear improving+} to a spherical average---$ S_{\lambda-r}$ and $S_r$, respectively---with an odd radial parameter. This ensures that \eqref{linear improving+} is always applicable.\\

\begin{prop}\label{prop7}
When $d \ge 5$, the operator $T_\lambda$ is bounded of type $(p,p;1)$ for all $\frac{d+1}{d-1}<p< 2$, and it satisfies the following improving estimate:
\begin{equation}
\|T_\lambda(f,g)\|_{1} \lesssim \lambda^{\frac{d}{2}-\frac{d}{p}}\|f\|_{p}\|g\|_{p}. \label{second decomposition improving}
\end{equation}
Moreover, when $d=4$, $\frac 53 < p < 2$, and $\lambda$ is odd, $T_\lambda$ satisfies 
\begin{equation}
\|T_\lambda(f,g)\|_{1} \lesssim_\eps \lambda^{2-\frac{4}{p} + \eps}\|f\|_{p}\|g\|_{p}, \label{second decomposition improving-a}
\end{equation}
for any fixed $\eps > 0$.
\end{prop}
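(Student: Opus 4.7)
The proof plan is essentially laid out in the calculations immediately preceding the proposition: combine the pointwise decomposition of Proposition \ref{first refinement} with H\"older's inequality and the known linear $\ell^p$-improving bounds. First I would take the $\ell^1$ norm of both sides of the decomposition. For each diagonal term $S_r(f)\,S_{\lambda - r}(g)$, I apply H\"older with dual exponents $p', p$ to obtain $\|S_r(f)\|_{p'}\|S_{\lambda - r}(g)\|_p$, then apply \eqref{linear improving} to the first factor and the $\ell^p$-boundedness of the spherical averaging operator to the second. The two boundary terms $\lambda^{-d/2} f S_\lambda(g)$ and $\lambda^{-d/2} g S_\lambda(f)$ are handled analogously: H\"older with exponents $(p, p')$ followed by one application of \eqref{linear improving}.

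For $d \geq 5$, after the applications of \eqref{linear improving}, what remains is to bound the numerical sum
\[
\sum_{r=1}^{\lambda-1} \frac{r^{d/2-1}(\lambda - r)^{d/2-1}}{\lambda^{d-1}}\cdot r^{d/2 - d/p}.
\]
Under the hypothesis $p > (d+1)/(d-1)$, the exponent $d - 2 - d/p$ on $r$ exceeds $-1$, so the standard beta-integral estimate $\sum_{r=1}^{\lambda-1} r^a(\lambda - r)^b \asymp \lambda^{a+b+1}$ yields a contribution of order $\lambda^{d/2 - d/p}$. The boundary terms contribute the lower-order $\lambda^{-d/p}$, giving \eqref{second decomposition improving}.

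For the $d = 4$ case, the subtlety is that Hughes' improved bound \eqref{linear improving+} applies only when the radial parameter is odd. Since $\lambda$ itself is assumed odd, exactly one of $r$ and $\lambda - r$ is odd for each $1 \le r \le \lambda - 1$. I would therefore split the sum over $r$ into odd-$r$ and even-$r$ subsums and arrange H\"older's inequality so that the $\ell^{p'}$ norm always falls on a spherical average with \emph{odd} radius: for odd $r$, bound $\|S_r(f)\|_{p'}\|S_{\lambda - r}(g)\|_p$ via \eqref{linear improving+} on the first factor; for even $r$, bound $\|S_r(f)\|_p\|S_{\lambda - r}(g)\|_{p'}$ via \eqref{linear improving+} on the second. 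The boundary terms involve $S_\lambda$ itself, whose radius is odd, so they directly admit \eqref{linear improving+}. Summing the resulting numerical expressions gives \eqref{second decomposition improving-a}.

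The main obstacle is the bookkeeping of this parity split in dimension four: one needs the hypothesis $\lambda$ odd precisely to guarantee that in each term of the decomposition at least one of the two radii is odd, so that the stronger bound applies somewhere. The remaining computations are routine---the condition $p > 5/3$ is exactly what ensures that the exponent $a = 3 - 4/p + \varepsilon$ appearing in the odd-$r$ subsum satisfies $a > -1$, so that the beta-integral estimate is valid and produces the claimed power $\lambda^{2 - 4/p + \varepsilon}$.
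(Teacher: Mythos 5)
Your proposal matches the paper's proof essentially step for step: take the $\ell^1$ norm of the decomposition from Proposition~\ref{first refinement}, apply H\"older's inequality on each term so that the $\ell^{p'}$ norm falls on one spherical average and the $\ell^p$ norm on the other, invoke \eqref{linear improving} (resp.\ \eqref{linear improving+} in the $d=4$ case) on the $\ell^{p'}$ factor and plain $\ell^p$-boundedness on the $\ell^p$ factor, and then sum. The parity split in $d=4$ and the role of $\lambda$ being odd are also exactly as in the paper.

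Two small slips in the write-up, neither fatal: the exponent on $r$ in the $d \ge 5$ sum is $(d/2-1)+(d/2-d/p) = d-1-d/p$, not $d-2-d/p$ (your stated final power $\lambda^{d/2-d/p}$ is nevertheless the correct one, so this reads as a typo). Also, the hypotheses $p>\frac{d+1}{d-1}$ and $p>\frac{5}{3}$ are not what guarantee $a>-1$ in the beta-sum estimates—$p>1$ already does that—rather, they are precisely the ranges on which \eqref{linear improving} and \eqref{linear improving+} are known to hold, and that is the constraint that actually pins down the range of the proposition.
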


\section{Second refinement and implications}

Here we prove our second refinement, Theorem \ref{second refinement}.

\begin{proof}[Proof of Theorem \ref{second refinement}]
Assume for now that $d \geq 5$, and choose $K$ to be the unique integer such that $2^{K} < \lambda \le 2^{K+1}$. We have
\begin{align*}
T_\lambda(f,g)(\bm x) &= \frac{1}{\lambda^{d-1}}\sum_{|\bm u|^2\leq\lambda}\sum_{|\bm v|^2 = \lambda-|\bm{u}|^2}f(\bm x-\bm u)g(\bm x-\bm v)\\
&\le \frac {g(\bm x)}{\lambda^{d-1}}\sum_{|\bm u|^2 = \lambda}f(\bm x-\bm u) + \frac{1}{\lambda^{d-1}}\sum_{k=0}^{K}\sum_{\lambda-|\bm{u}|^2\approx2^k}f(\bm x-\bm u)\sum_{|\bm v|^2 = \lambda-|\bm u|^2}g(\bm x-\bm v)\\
 &  \lesssim\lambda^{1-d/2}g(\bm x)A_\lambda(f) (\bm x) + \frac{1}{\lambda^{d-1}}\sum_{k=0}^{K}\sum_{\lambda-|\bm{u}|^2\approx2^k}f(\bm x-\bm u)\sum_{|\bm v|^2 = \lambda-|\bm u|^2}g(\bm x-\bm v),
\end{align*}
where $\lambda - |\bm u|^2 \approx 2^k$ means $2^k \leq \lambda - |\bm u|^2 < 2^{k+1}$. Note that we could have written the first term instead as $\lambda^{-d/2}g(\bm x)S_\lambda (\bm x)$ and maintained equality (for $d \geq 5$).  However, we will shortly be using an upper bound everywhere, and this form is more suited to our applications. 

We now focus our attention on the last sum (over $k, \bm u, \bm v$). Define this dyadic maximal operator
\begin{equation}
\label{dyadic spherical}
    {S}_{\Lambda}^*(g)(\bm x) =  \sup_{\lambda \approx \Lambda}{\lambda^{1-d/2}}\sum_{|\bm v|^2 = \lambda}g(\bm x-\bm v).
\end{equation}
The sum in question equals
\begin{align*}
&\frac{1}{\lambda^{d-1}}\sum_{k=0}^{K}\sum_{\lambda-|\bm{u}|^2\approx2^k}f(\bm x-\bm u) (\lambda-|\bm u|^2)^{d/2-1} S_{\lambda-|\bm u|^2}(g)(\bm x)\\
&\leq \frac{1}{\lambda^{d-1}}\sum_{k=0}^{K}\sum_{\lambda-|\bm{u}|^2\approx2^k}f(\bm x-\bm u) (\lambda-|\bm u|^2)^{d/2-1} S_{2^k}^*(g)(\bm x)\\
&\lesssim \frac{1}{\lambda^{d-1}}\sum_{k=0}^{K} (2^k)^{d/2-1}  S_{2^k}^*(g)(\bm x) \sum_{\lambda-|\bm{u}|^2\approx2^k}f(\bm x-\bm u).
\end{align*}
Hence,
\begin{align*}
T_\lambda(f,g)(\bm x) \lesssim\lambda^{1-d/2}g(\bm x)A_\lambda(f) (\bm x) + \lambda^{1-d}\sum_{k=0}^{K} (2^k)^{d/2-1}  S_{2^k}^*(g)(\bm x) \sum_{\lambda-|\bm{u}|^2\approx2^k}f(\bm x-\bm u).
\end{align*}
When $\lambda-|\bm u|^2\approx2^k$, we have $|\bm u|^2 \le \lambda-2^k$, so 
\[
\sum_{\lambda-|\bm{u}|^2\approx2^k}f(\bm x-\bm u) \lesssim (\lambda-2^k)^{d/2}A_{\lambda-2^k}(f)(\bm x).
\]
Altogether, we find that
\[
    T_\lambda(f,g)(\bm x) \lesssim\lambda^{1-d/2}g(\bm x)A_\lambda(f) (\bm x) + \lambda^{1-d}\sum_{k=0}^{K} (2^k)^{d/2-1}(\lambda-2^k)^{d/2} A_{\lambda-2^k}(f)(\bm x) S_{2^k}^*(g)(\bm x).
\]

Again, if $d=3,4$ we can replace \eqref{spherical average high dim} with \eqref{spherical average} to get an upper bound, as in the proof of Theorem \ref{first refinement}.
\end{proof}

To take advantage of the above decomposition, we will rely on the $\ell^p$-improving bound for the dyadic maximal operator $S_\Lambda^*$. By Theorem 2 of \cite{Hughes}, in the range  $\frac{d}{d-2}<p<2$, this operator satisfies the same range of $\ell^p$-improving estimates as the single average for the range (as well as endpoint bounds at $p = \frac{d}{d-2}$). That is, we have
\begin{equation}\label{eq5.2}
\| S_\Lambda^* f \|_{p'} \lesssim \Lambda^{\frac d2 - \frac dp}\| f \|_p. 
\end{equation}

\begin{proof}[Proof of Corollary \ref{second refinement corollary}]
Suppose first that $p < 2$. By the triangle and H\"older's inequalities, 
\[
    \|T_\lambda(f,g)\|_{1} \lesssim \lambda^{1-\frac d2}\|A_\lambda (f)\|_{{p'}}\|g\|_{p}+{\lambda^{1-d}}\sum_{k=0}^{K}(2^k)^{\frac d2-1}(\lambda-2^k)^{\frac d2}\|A_{\lambda-2^k}(f)\|_{p}\|{S}_{2^k}^*(g)\|_{{p'}},
\]
where $K = \lfloor \log_2\lambda \rfloor$. Next, we appeal to the improving inequalities for both the spherical and ball averages: \eqref{eq3.2} and the inequality (a variant of \eqref{eq3.2} for $p=1$)
\[ \| A_\lambda(f) \|_q \lesssim \lambda^{-\frac d2 + \frac d{2q}} \|f\|_1. \]
This yields the upper bound
\begin{align*}
&\bigg( \lambda^{1-\frac d2}\lambda^{-\frac d2 +\frac{d}{2p'}} + \lambda^{1-d}\sum_{k=0}^{K}(2^k)^{\frac d2-1}(\lambda-2^k)^{\frac d2}(\lambda-2^k)^{-\frac{d}{2}+\frac{d}{2p}}(2^k)^{\frac{d}{2}-\frac{d}{p}} \bigg) \|f\|_{1}\|g\|_{p}\\
&\lesssim \bigg( \lambda^{1-\frac d2-\frac d{2p}} + \lambda^{1-d}\sum_{k=0}^{K}(2^k)^{d-1-\frac dp}(\lambda-2^k)^{\frac{d}{2p}} \bigg) \|f\|_{1}\|g\|_{p} \\
&\lesssim \bigg( \lambda^{1-\frac d2-\frac d{2p}} + \lambda^{1-d + \frac{d}{2p}}\sum_{k=0}^{K}(2^k)^{d-1-\frac dp} \bigg) \|f\|_{1}\|g\|_{p} \lesssim \lambda^{-\frac d{2p}}      \|f\|_{1}\|g\|_{p}.
 \end{align*}
Thus we get an overall bound of
\begin{equation*}
 \|T_\lambda(f,g)\|_{1} \lesssim \lambda^{-\frac{d}{2p}}\|f\|_{1}\|g\|_{p}.
\end{equation*}

When $p \ge 2$, the proof is more conventional, since the spherical maximal function is bounded on $\ell^p$. We switch the roles of the ball and spherical averages in the application of H\"older's inequality to get
\begin{align*}
\|T_\lambda(f,g)\|_{1} &\lesssim \lambda^{1-\frac d2}\|A_\lambda (f)\|_{{p'}}\|g\|_{p}+{\lambda^{1-d}}\sum_{k=0}^{K}(2^k)^{\frac d2-1}(\lambda-2^k)^{\frac d2}\|A_{\lambda-2^k}(f)\|_{p'}\|{S}_{2^k}^*(g)\|_{{p}} \\
&\lesssim \bigg( \lambda^{1-\frac d2}\lambda^{-\frac d2 +\frac{d}{2p'}} + \lambda^{1-d}\sum_{k=0}^{K}(2^k)^{\frac d2-1}(\lambda-2^k)^{\frac d2}(\lambda-2^k)^{-\frac{d}{2}+\frac{d}{2p'}} \bigg) \|f\|_{1}\|g\|_{p} \\
&\lesssim \bigg( \lambda^{1-\frac d2-\frac{d}{2p}} + \lambda^{1-d+\frac{d}{2p'}}\sum_{k=0}^{K}(2^k)^{\frac d2-1} \bigg) \|f\|_{1}\|g\|_{p} \lesssim \lambda^{-\frac d{2p}} \|f\|_{1}\|g\|_{p},
\end{align*}
and we arrive at the same conclusion as in the case $p<2$.
\end{proof}

When both $f$ and $g$ are summable, we may use symmetry to switch their roles. We may then interpolate between the bounds at $(r,1)$ and $ (1,r)$, with $\frac d{d-2} < r < 2$ to establish the following (more symmetric) version of Corollary \ref{second refinement corollary}.

\begin{cor}
\label{second refinement corollary symmetric}
If $f,g \in \ell^1(\mathbb Z^d)$, the bilinear spherical averages satisfy the following $\ell^p$-improving bound
\[
\|T_\lambda(f,g)\|_{1} \lesssim \lambda^{-\frac d2(\frac{1}{p}+\frac{1}{q}-1)}\|f\|_{p}\|g\|_{q}
\]
whenever $d \geq 5$, $p,q\geq 1$ and $1 < \frac{1}{p} + \frac{1}{q} < \frac{2d-2}{d}$.
\end{cor}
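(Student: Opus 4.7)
My plan is to derive Corollary \ref{second refinement corollary symmetric} from Corollary \ref{second refinement corollary} by combining a symmetry observation with bilinear interpolation. The first step is to note that the constraint $|\bm u|^2+|\bm v|^2=\lambda$ in the definition of $T_\lambda$ is symmetric under swapping $(\bm u,\bm v)\leftrightarrow(\bm v,\bm u)$, so $T_\lambda(f,g)(\bm x)=T_\lambda(g,f)(\bm x)$. Applying Corollary \ref{second refinement corollary} to both orderings then yields the pair of endpoint bounds
\[
\|T_\lambda(f,g)\|_1\lesssim \lambda^{-d/(2r)}\|f\|_1\|g\|_r,\qquad \|T_\lambda(f,g)\|_1\lesssim \lambda^{-d/(2r)}\|f\|_r\|g\|_1,
\]
both valid for every $d\ge 5$ and $r>\tfrac{d}{d-2}$.

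Next, given $(p,q)$ satisfying the hypotheses of the corollary, I would define $r$ by $\tfrac 1r=\tfrac 1p+\tfrac 1q-1$. The two-sided condition $1<\tfrac 1p+\tfrac 1q<\tfrac{2d-2}{d}$ translates exactly into $0<\tfrac 1r<\tfrac{d-2}{d}$, so $r$ is finite and strictly larger than $\tfrac{d}{d-2}$, and both endpoint bounds above are available. The target point $(\tfrac 1p,\tfrac 1q)$ lies on the segment from $(1,\tfrac 1r)$ to $(\tfrac 1r,1)$; writing
\[
\Bigl(\tfrac 1p,\tfrac 1q\Bigr)=(1-\theta)\Bigl(1,\tfrac 1r\Bigr)+\theta\Bigl(\tfrac 1r,1\Bigr),\qquad \theta=\frac{1-1/p}{1-1/r}\in[0,1],
\]
and invoking the bilinear version of Riesz--Thorin interpolation (Calder\'on's multilinear complex interpolation theorem) with the output space fixed at $\ell^1(\Z^d)$ produces the desired bound. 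Since both endpoint estimates carry the identical $\lambda$-dependence $\lambda^{-d/(2r)}$, the geometric mean of the operator norms delivers the same factor $\lambda^{-d/(2r)}=\lambda^{-(d/2)(1/p+1/q-1)}$ in the interpolated estimate.

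I do not anticipate any genuine obstacle; the only subtlety worth mentioning is that the interpolation constants must be taken uniform in $\lambda$, which is automatic because the implicit constants in Corollary \ref{second refinement corollary} depend only on $d$ and $r$, not on $\lambda$. The degenerate cases $\theta\in\{0,1\}$, corresponding to $p=1$ or $q=1$, reduce immediately to Corollary \ref{second refinement corollary} without requiring any interpolation. The main conceptual content is that symmetrization in $(f,g)$ produces the clean Young-convolution-type exponent $\tfrac d2\bigl(\tfrac 1p+\tfrac 1q-1\bigr)$.
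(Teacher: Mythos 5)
Your proposal is correct, and it reaches the same conclusion as the paper via a slightly different mechanism. You correctly observe the symmetry $T_\lambda(f,g)=T_\lambda(g,f)$, correctly translate the hypothesis $1<\frac1p+\frac1q<\frac{2d-2}{d}$ into $r>\frac{d}{d-2}$ for $r$ defined by $\frac1r=\frac1p+\frac1q-1$, and correctly note that $(\frac1p,\frac1q)$ lies on the segment joining $(1,\frac1r)$ and $(\frac1r,1)$ (both coordinates sum to $1+\frac1r$ and are sandwiched between $\frac1r$ and $1$). Since the two endpoint constants share the identical $\lambda$-factor $\lambda^{-d/(2r)}$, the interpolated operator norm inherits it unchanged, which is the key quantitative observation.

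Where you differ from the paper is in how the interpolation is carried out. You invoke bilinear Riesz--Thorin (Calder\'on's multilinear complex interpolation theorem) as a black box. The paper instead proves the interpolation by hand: it factors $f(\bm x-\bm u)g(\bm x-\bm v)$ as a product of two terms $f^\alpha g^{1-\beta}\cdot f^{1-\alpha}g^\beta$ inside the defining sum, applies H\"older over $(\bm u,\bm v)$ with exponents $(s,s')=(\theta^{-1},(1-\theta)^{-1})$ to get the pointwise bound $T_\lambda(f,g)\le T_\lambda(f^{\alpha s},g^{(1-\beta)s})^{1/s}\,T_\lambda(f^{(1-\alpha)s'},g^{\beta s'})^{1/s'}$, and then applies H\"older again over $\bm x$. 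This is the standard ``power trick'' for positive bilinear operators and is self-contained, requiring only H\"older rather than a multilinear complex interpolation theorem. Both routes are legitimate here because $T_\lambda$ is a positive bilinear operator on discrete $\ell^p$ spaces; the power trick buys elementarity and avoids invoking complex interpolation at an $\ell^1$ endpoint, while your approach is shorter to state if you are willing to cite the general theorem. One small presentational remark: it is worth saying explicitly that the endpoint spaces $\ell^1\times\ell^r\to\ell^1$ and $\ell^r\times\ell^1\to\ell^1$ satisfy the admissibility conditions of the bilinear Riesz--Thorin theorem (all exponents in $[1,\infty]$, target exponent fixed at $1$), since that is what licenses the citation.
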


\begin{proof}
For a given pair $(p,q)$ satisfying the hypotheses, choose $r > \frac d{d-2}$ and $\theta \in (0,1)$ so that
\[ \frac 1r = \frac 1p + \frac 1q - 1, \quad \frac 1p = \theta + \frac {1-\theta}r, \quad \frac 1q = \frac {\theta}r + (1-\theta). \] 
These choices ensure that $\alpha = \theta p$ and $\beta = (1-\theta)q$ satisfy $0 < \alpha, \beta < 1$. We now set $s = \theta^{-1}$ and apply H\"older's inequality to get
\begin{align*}
\|T_\lambda(f,g)\|_1 &\leq \|T_\lambda(f^\alpha, g^{1-\beta})\|_s  \|T_\lambda(f^{1-\alpha}, g^{\beta})\|_{s'} \\
&= \big\| T_\lambda(f^{\alpha s}, g^{(1-\beta)s}) \big\|_1^{1/s}  \big\| T_\lambda(f^{(1-\alpha)s'}, g^{\beta s'}) \big\|_1^{1/s'}.    
\end{align*}
Note that $\alpha s = (1-\alpha)s'r = p$ and $\beta s' = (1-\beta)sr = q$. Applying the two $\ell^r$-improving bounds, we deduce that
\begin{align*}
\|T_\lambda(f,g)\|_1 &\lesssim \left( \lambda^{-\frac d{2r}} \| f^p \|_1 \big\| g^{(1-\beta)s} \big\|_r \right)^{1/s} \left( \lambda^{-\frac d{2r}} \big\| f^{(1-\alpha)s'} \big\|_r\| g^q \|_1  \right)^{1/s'} \\
&= \lambda^{-\frac d{2r}} \left(  \| f^p \|_1 \| g^q \|_1^{1/r} \right)^{1/s} \left( \| f^{p} \|_1^{1/r} \| g^q \|_1  \right)^{1/s'} = \lambda^{-\frac d{2r}} \| f \|_p \| g \|_q. \qedhere
\end{align*}
\end{proof}
{We note that Corollary \ref{second refinement corollary} provides a stronger bound than Proposition \ref{prop7} over the entire range $\frac {d}{d-2} < p < 2$. Indeed, even at the low endpoint of $p = \frac{d+1}{d-1}$, the exponent of $\lambda$ in Proposition \ref{prop7} approaches $\frac{3d -d^2}{2(d+1)}$, which exceeds the limiting value $1 - \frac d2$ of the exponent $\frac{-d}{2p}$ in the corollary as $p$ approaches $\frac{d}{d-2}$. Thus, at least when $g \in \ell^q(\mathbb Z^d)$ for all $q > \frac {d+1}{d-1}$, it is natural to interpolate Proposition \ref{prop7} and Corollary \ref{second refinement corollary} between these two bounds. Interpolation between Proposition \ref{prop7} at $p_1 = \frac {d+1}{d-1} + \delta$ and Corollary \ref{second refinement corollary} at $p_2 = \frac {d}{d-2} + \delta$, with $\delta > 0$ sufficiently small, gives}
\[ \|T_\lambda(f,g)\|_{1} \lesssim \lambda^{-\frac{d}{2p} + \frac {\theta d}{2p_1'}} \|f\|_{1}\|g\|_{p}, \]
where $\theta \in (0,1)$ is chosen so that $p^{-1} = \theta p_1^{-1} + (1-\theta)p_2^{-1}$. In particular, for any fixed $\eps > 0$, we may choose $\delta > 0$ so that
\[ \frac {\theta d}{2p_1'} - \frac d{2p} < \frac {d(d-(d-2)p)}{2p} - \frac d{2p} + \eps = \frac {d(d-1)}{2p} - \frac{d(d-2)}{2} + \eps. \]
This gives the following result.

\begin{cor}
When $d \ge 5$ and $\frac {d+1}{d-1} < p \le \frac {d}{d-2}$, the bilinear spherical averages satisfy the following $\ell^p$-improving bound
\[
\|T_\lambda(f,g)\|_{1} \lesssim_\eps \lambda^{\frac {d(d-1)}{2p} - \frac{d(d-2)}{2} + \eps}\|f\|_{1}\|g\|_{p}
\]
for any fixed $\eps > 0$.
\end{cor}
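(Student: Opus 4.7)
The plan is to formalize the interpolation outlined in the paragraph immediately preceding the corollary. The key observation is that both source bounds can be put in bilinear type $(1, p_i; 1)$, so it suffices to apply standard (linear) Riesz-Thorin interpolation in the $g$-variable, with $f$ fixed in $\ell^1$.

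I would begin by setting $p_1 = \frac{d+1}{d-1}+\delta$ and $p_2 = \frac{d}{d-2}+\delta$ for a small $\delta > 0$ to be chosen at the end. Proposition \ref{prop7} at $p_1$, combined with the discrete $\ell^p$ nesting $\|f\|_{p_1} \le \|f\|_1$, gives $\|T_\lambda(f,g)\|_1 \lesssim \lambda^{d/2 - d/p_1}\|f\|_1\|g\|_{p_1}$, and Corollary \ref{second refinement corollary} at $p_2$ gives $\|T_\lambda(f,g)\|_1 \lesssim \lambda^{-d/(2p_2)}\|f\|_1\|g\|_{p_2}$. For fixed $f \in \ell^1$ the map $g \mapsto T_\lambda(f,g)$ is linear, so for any $p \in (p_1, p_2)$ and the unique $\theta \in (0,1)$ satisfying $p^{-1} = \theta p_1^{-1} + (1-\theta)p_2^{-1}$, Riesz-Thorin yields
\[ \|T_\lambda(f,g)\|_1 \lesssim \lambda^{E(\delta)}\|f\|_1\|g\|_p, \qquad E(\delta) = \theta\bigl(\tfrac{d}{2}-\tfrac{d}{p_1}\bigr) - (1-\theta)\tfrac{d}{2p_2}. \]
A brief rearrangement using $1/p_1' = 1 - 1/p_1$ rewrites this as $E(\delta) = -\tfrac{d}{2p} + \tfrac{\theta d}{2p_1'}$, matching the formula displayed in the excerpt.

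The final step is to let $\delta \to 0^+$. One has $p_1' \to \tfrac{d+1}{2}$, so $\tfrac{d}{2p_1'} \to \tfrac{d}{d+1}$; solving the relation defining $\theta$ at $\delta = 0$ yields $\theta \to \tfrac{(d+1)(d-(d-2)p)}{2p}$, whence $\tfrac{\theta d}{2p_1'} \to \tfrac{d(d-(d-2)p)}{2p}$. Consequently,
\[ \lim_{\delta \to 0^+} E(\delta) = -\frac{d}{2p} + \frac{d(d-(d-2)p)}{2p} = \frac{d(d-1)}{2p} - \frac{d(d-2)}{2}. \]
Since $E(\delta)$ is continuous at $\delta = 0$, for any fixed $\eps > 0$ one may choose $\delta > 0$ small enough that $E(\delta) \le E(0) + \eps$, which is precisely the bound asserted by the corollary.

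The only real obstacle is bookkeeping: one must check that $\theta$ and $p_1'$ depend continuously on $\delta$ near $0$ without blowing up, and that the range $p_1 < p < p_2$ is nonempty for the intended $p$. Both are rational functions of $\delta$ that remain smooth as $\delta \to 0^+$ so long as $p$ stays strictly above $\tfrac{d+1}{d-1}$, which is exactly the hypothesis of the corollary; the strictness of that inequality is precisely what forces the $\eps$-loss in the statement.
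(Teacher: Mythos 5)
Your proposal is correct and follows essentially the same route as the paper. The paper interpolates Proposition~\ref{prop7} at $p_1 = \frac{d+1}{d-1}+\delta$ with Corollary~\ref{second refinement corollary} at $p_2 = \frac{d}{d-2}+\delta$ to get the exponent $-\frac{d}{2p}+\frac{\theta d}{2p_1'}$, then sends $\delta \to 0^+$ to absorb the discrepancy into $\eps$; you carry out exactly this argument, filling in the algebra the paper leaves implicit (the rewriting of $E(\delta)$, the explicit formula $\theta \to \frac{(d+1)(d-(d-2)p)}{2p}$, and the closed-form limit $\frac{d(d-1)}{2p}-\frac{d(d-2)}{2}$).
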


\section{Applications of $\ell^p$-improving estimates}

We now show how $\ell^p$-improving estimates can be leveraged into bounds dipping below the Banach setting for the discrete spherical averaging operator. We build on the setup of Iosevich, the third author and Sovine \cite{IPS22}, who in turn built on the work of Grafakos and Kalton \cite{GK01}. Due to the different form of $\ell^p$-improving estimates in the discrete setting, where the size of the radius matters, new features arise that are not present in the continuous setting. The following lemma plays a crucial role in our arguments.
  
\begin{lemma}\label{lp improving lemma}
Assume we have an $\ell^p$ improving estimate
\[
\| T_{\lambda}(f,g) \|_{1} \lesssim \lambda^{-\gamma}\|f\|_{p}\|g\|_{q}
\]
for $p,q$ such that $\frac{1}{r} := \frac{1}{p}+\frac{1}{q} > 1$. Then for each $s\in[r,1]$ we get
\[
\| T_{\lambda}(f,g) \|_{s} \lesssim \lambda^{\frac{d(1-s)}{2s}-\gamma} \| f \|_{p}\| g \|_{q}
\]
where the constants may depend on $s$ and $d$ but do not depend on $
\lambda$.
\end{lemma}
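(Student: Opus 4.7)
The plan is to establish the lemma at the endpoint $s = r$ by a cube-localization argument, then recover the full range $r \leq s \leq 1$ by log-convexity interpolation against the hypothesized $\ell^1$ bound at $s = 1$.

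For the endpoint $s = r$, I would partition $\Z^d$ into disjoint cubes $\{Q_i\}$ of side length $\approx \lambda^{1/2}$, and let $Q_i^*$ denote an enlarged cube of side $\approx 3\lambda^{1/2}$ centered at the same point, chosen so that for every $\bm x \in Q_i$ the constraint $|\bm u|^2 + |\bm v|^2 = \lambda$ forces both $\bm x - \bm u$ and $\bm x - \bm v$ to lie in $Q_i^*$. Writing $f_i^* = f \bm 1_{Q_i^*}$ and $g_i^* = g \bm 1_{Q_i^*}$, this gives the pointwise identity $T_\lambda(f,g) \bm 1_{Q_i} = T_\lambda(f_i^*, g_i^*) \bm 1_{Q_i}$, whose support has size $|Q_i| \asymp \lambda^{d/2}$. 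The power-mean inequality (Hölder for $r < 1$) then yields
\[
\|T_\lambda(f,g) \bm 1_{Q_i}\|_r^r \leq |Q_i|^{1-r} \|T_\lambda(f_i^*, g_i^*)\|_1^r \lesssim \lambda^{d(1-r)/2 - \gamma r} \|f_i^*\|_p^r \|g_i^*\|_q^r,
\]
after invoking the assumed $\ell^1$ bound on $T_\lambda$.

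Summing over the disjoint cubes $Q_i$ and then applying Hölder's inequality in the outer sum, with exponents $p/r$ and $q/r$ (which are conjugate precisely because $r/p + r/q = 1$ by the definition of $r$), I obtain
\[
\sum_i \|f_i^*\|_p^r \|g_i^*\|_q^r \leq \Bigl(\sum_i \|f_i^*\|_p^p\Bigr)^{r/p} \Bigl(\sum_i \|g_i^*\|_q^q\Bigr)^{r/q} \lesssim \|f\|_p^r \|g\|_q^r,
\]
where the final step uses that the enlarged cubes $Q_i^*$ have bounded overlap (with multiplicity depending only on $d$). Taking $r$th roots establishes the endpoint bound $\|T_\lambda(f,g)\|_r \lesssim \lambda^{d(1-r)/(2r) - \gamma} \|f\|_p \|g\|_q$.

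For $r < s < 1$, I would appeal to log-convexity of the $\ell^s$ quasi-norms, a direct consequence of Hölder's inequality valid for all positive exponents: for $\theta \in (0,1)$ with $1/s = \theta/r + (1-\theta)$,
\[
\|T_\lambda(f,g)\|_s \leq \|T_\lambda(f,g)\|_r^\theta \|T_\lambda(f,g)\|_1^{1-\theta}.
\]
Substituting the endpoint bounds produces the exponent $\theta \cdot d(1-r)/(2r) - \gamma$ on $\lambda$, which a short computation from the definition of $\theta$ simplifies to $d(1-s)/(2s) - \gamma$, as required. The main obstacle is the endpoint $s = r$: one must choose $Q_i^*$ large enough for the localization identity to be exact, yet small enough that the enlarged cubes retain bounded overlap, and the outer Hölder step closes precisely because $r$ is defined so that $r/p + r/q = 1$, which also identifies $r$ as the natural critical exponent at which the support-size factor $\lambda^{d/2}$ first enters the bound.
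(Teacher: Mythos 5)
Your proof is correct and rests on essentially the same idea as the paper's: localize $T_\lambda$ to cubes of side comparable to $\sqrt\lambda$, use H\"older together with the support size $\lambda^{d/2}$ to pass from $\ell^1$ to $\ell^s$ on each cube, and then sum using the conjugacy $\frac{r}{p}+\frac{r}{q}=1$. The two executions differ only cosmetically: you localize both arguments to the same enlarged cube $Q_i^*$ and then prove only the endpoint $s=r$, recovering $r<s<1$ by logarithmic convexity, whereas the paper decomposes $T_\lambda(f,g)$ as a near-diagonal double sum $\sum_{\bm l}\sum_{\bm h\in\mathcal D}T_\lambda(f\bm 1_{Q_{\bm l}},g\bm 1_{Q_{\bm l+\bm h}})$ (absorbing the finite sum over $\bm h$ into the implicit constant) and runs the H\"older-plus-support argument directly for every $s\in[r,1)$, appealing to the nesting $\ell^r\hookrightarrow\ell^s$ at the final step rather than interpolating at the end. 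Both routes are valid and of comparable length.
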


We note that this is specifically stated for the discrete bilinear spherical averages, but a general theorem of this type can easily be stated in the spirit of \cite{IPS22}. 

\begin{proof}[Proof of Lemma \ref{lp improving lemma}]
Tile $\mathbb{R}^d$ with a family of cubes $Q_{\bm{l}}$, $\bm{l}\in\mathbb{Z}^d$, each of sidelength slightly larger than $2\sqrt{\lambda}$. It is clear then that $T_{\lambda}(f\bm 1_{Q_{\bm{l}}},g\bm 1_{Q_{\bm{m}}}) = 0$ if $| \bm{l} - \bm{m} |_{\infty} > 1$ (where $| \cdot |_\infty$ is the usual $\infty$-norm on $\mathbb R^d$). Further, we will use that if $| \bm{l} - \bm{m} |_{{\infty}} \leq 1$ then the support of $T_{\lambda}(f\bm 1_{Q_{\bm{l}}},g\bm 1_{Q_{\bm{m}}})$ is contained in a larger cube of sidelength $3\sqrt{\lambda}$. Set $\mathcal D=\lbrace \bm{l}\in\mathbb{Z}^d : | \bm l |_{{\infty}} \leq 1 \rbrace$. Then for each $s\in[r,1)$, we get
\begin{align*}
\| T_{\lambda}(f,g) \|_{s}^s  &=  \sum_{\bm{x}\in\mathbb{Z}^d} \left| \sum_{\bm{l}\in\mathbb{Z}^d}\sum_{\bm{h}\in \mathcal D} T_{\lambda}(f\bm 1_{Q_{\bm{l}}},g\bm 1_{Q_{\bm{l}+\bm{h}}})(\bm{x}) \right|^{s}\\
&\lesssim \sum_{\bm{h}\in \mathcal D}  \sum_{\bm{x}\in\mathbb{Z}^d} \left| \sum_{\bm{l}\in\mathbb{Z}^d} T_{\lambda}(f\bm 1_{Q_{\bm{l}}},g\bm 1_{Q_{\bm{l}+\bm{h}}})(\bm{x}) \right|^{s} \\
&\lesssim \sum_{\bm{h}\in \mathcal D}  \sum_{\bm{x}\in\mathbb{Z}^d} \sum_{\bm{l} \in \mathbb{Z}^d} |T_{\lambda}(f\bm 1_{Q_{\bm{l}}},g\bm 1_{Q_{\bm{l}+\bm{h}}})(\bm{x}) |^{s}. 
\end{align*}
Here, we first use cancellation on far away cubes, then the quasi-triangle inequality, and finally Jensen's inequality for concave functions. We now use H\"{o}lder's inequality to deduce that
\begin{align*}
\| T_{\lambda}(f,g) \|_{s}^s  &\lesssim \sum_{\bm{h}\in \mathcal D} \sum_{\bm{l} \in \mathbb{Z}^d} \big\| T_{\lambda}(f\bm 1_{Q_{\bm{l}}},g\bm 1_{Q_{\bm{l}+\bm{h}}}) \big\|_1^s \cdot \big| \mathrm{supp} \, T_{\lambda} (f\bm 1_{Q_{\bm{l}}},g\bm 1_{Q_{\bm{l}+\bm{h}}}) \big|^{1-s} \\
&\lesssim \lambda^{\frac{d(1-s)}{2}} \sum_{\bm{h}\in \mathcal D} \sum_{\bm{l} \in \mathbb{Z}^d} \big\| T_{\lambda}(f\bm 1_{Q_{\bm{l}}},g\bm 1_{Q_{\bm{l}+\bm{h}}}) \big\|_1^s.
\end{align*}
We finally use our assumed $\ell^p$-improving estimate, the nestedness of $\ell^p$-spaces, and H\"{o}lder's inequality (noting that $\frac rp + \frac rq = 1$) to conclude that
\begin{align*}
\| T_{\lambda}(f,g) \|_{s}  &\lesssim \lambda^{\frac{d(1-s)}{2s}} \left( \sum_{\bm{h}\in \mathcal D} \sum_{\bm{l} \in \mathbb{Z}^d} \lambda^{-s\gamma} \| f\bm 1_{Q_{\bm{l}}} \|_{p}^{s} \| g\bm 1_{Q_{\bm{l}+\bm{h}}} \|_{q}^{s} \right)^{1/s} \\
&\lesssim \lambda^{\frac{d(1-s)}{2s} - \gamma} \left( \sum_{\bm{h}\in \mathcal D} \sum_{\bm{l} \in \mathbb{Z}^d} \| f\bm 1_{Q_{\bm{l}}} \|_{p}^{r} \| g\bm 1_{Q_{\bm{l}+\bm{h}}} \|_{q}^{r} \right)^{1/r} \\
&\lesssim \lambda^{\frac{d(1-s)}{2s}-\gamma} \sum_{\bm{h}\in \mathcal D} \left( \sum_{\bm{l}\in\mathbb{Z}^d} \| f\bm 1_{Q_{\bm{l}}} \|_{p}^{p} \right)^{\frac{1}{p}} \left( \sum_{\bm{l}\in\mathbb{Z}^d} \| g\bm 1_{Q_{\bm{l + h}}} \|_{q}^{q} \right)^{\frac{1}{q}}\\
&\lesssim \lambda^{\frac{d(1-s)}{2s}-\gamma} \| f \|_{p}\| g \|_{q}. \qedhere
\end{align*}
\end{proof}

\begin{remark}
    We make some quick observations to put our ranges obtained via Lemma \ref{lp improving lemma} into perspective.  Restricting to the range $\frac{1}{r} \leq \frac{1}{p}+\frac{1}{q}$ (see \cite{AP} for a more thorough discussion of why this range is necessary for the maximal variant), the method in Corollary \ref{second refinement corollary} gives $r > \frac{d}{2d-2}$ from calculating $r$ with $p = \frac{d}{d-2}$ and $q = 1$.  Due to the nesting property, a bound for $(p_0,q_0)$ carries over to a bound for $(p,q)$ for all $p \geq p_0$, $q \geq q_0$ (that is, $\frac{1}{p}+\frac{1}{q} \leq \frac{1}{p_0}+\frac{1}{q_0}$), which yields $1/r\leq 2$, a much more simplified range than in the continuous setting.  This simplification stems from the fact that there are no balls with arbitrarily small radius in the discrete setting (the characteristic function of a ball with small radius dictates the upper bound on $\frac{1}{p}+\frac{1}{q}$ in \cite{JL}, and that counterexample does not apply in our situation).  It is worth noting that if the range of $\ell^p$-improving is extended from what is currently known, our bounds will subsequently improve.

Necessary conditions in the linear (discrete) setting are listed in \cite{Hughes} and \cite{KL}, however, two different ranges are given.  Details in the proof of the necessary range are quite sparse, and the example given there relates to deep, difficult problems in number theory, and connects to counting simplices.  Due to this background and the even greater complexity that the bilinear situation entails, we plan to look more deeply into all of these necessary conditions in future work, while pursuing sharper bounds for the simplex operators via number theory.
\end{remark}

Finally, in light of the above lemma, we revisit the implications for our results from \S\ref{sec:4}. Recall Proposition \ref{prop7}. We leverage that result through an application of the lemma.

\begin{proof}[Proof of Corollary \ref{first refinement corollary}]
We apply Lemma \ref{lp improving lemma} with $p=q$ and $\gamma = -\frac d2 + \frac dp$, so $\frac{1}{r} = \frac{2}{p}$.  We get
\begin{equation*}
\| T_{\lambda}(f,g) \|_{s} \lesssim \lambda^{\frac{d(1-s)}{2s} + \frac{d}{2}-\frac{d}{p}} \| f \|_{p}\| g \|_{p} \lesssim \lambda^{\frac{d}{2s} -\frac{d}{p}} \| f \|_{p}\| g \|_{p},
\end{equation*}
which is $\ell^p$-improving on the full range $\frac p2 \le s\leq 1$ for $\frac{d+1}{d-1} < p < 2$.
\end{proof}

\subsection*{Acknowledgments}
The first author was supported by NSF grant DMS-1954407. The third author was supported by Simons Foundation Grant \#360560. 

\bibliographystyle{amsplain}

\end{document}